\documentclass[11pt,reqno]{amsart}

\usepackage{amsmath,amssymb,mathrsfs}
\usepackage{graphicx,cite,times}
\usepackage{cases}
\usepackage{color}
\usepackage{appendix}

\setlength{\topmargin}{-1.5cm}
\setlength{\oddsidemargin}{0.0cm}
\setlength{\evensidemargin}{0.0cm}
\setlength{\textwidth}{16.7cm}
\setlength{\textheight}{23cm}
\headheight 20pt
\headsep    26pt
\footskip 0.4in

\newtheorem{theo}{Theorem}[section]

\newtheorem{lemm}[theo]{Lemma}

\newtheorem{rema}[theo]{Remark}

\newtheorem{assu}{Hypothesis}
\numberwithin{equation}{section}

\begin{document}

\title [Response solutions for  wave equations]{Response solutions for wave equations with variable
wave speed and periodic forcing}

\author{Bochao Chen}
\address{School of Mathematics, Jilin University, Changchun, Jilin 130012, P.R.China}
\email{chenbc758@163.com}

\author{Yixian Gao}
\address{School of Mathematics and Statistics, Center for Mathematics and
Interdisciplinary Sciences, Northeast Normal University, Changchun, Jilin
130024, P.R.China}
\email{gaoyx643@nenu.edu.cn}

\author{Yong  Li}
\address{School of Mathematics, Jilin University, Changchun, Jilin 130012;
School of Mathematics and Statistics, Center for Mathematics and
Interdisciplinary Sciences, Northeast Normal University, Changchun, Jilin
130024, P.R.China}
\email{yongli@nenu.edu.cn}

\author{Xue Yang}
\address{School of Mathematics, Jilin University, Changchun, Jilin 130012;
School of Mathematics and Statistics, Center for Mathematics and
Interdisciplinary Sciences, Northeast Normal University, Changchun, Jilin
130024, P.R.China}
\email{yangx100@nenu.edu.cn}

\thanks{The research of  BC was supported in part by NSFC grant 11901232 and the China Postdoctoral Science Foundation grant  2019M651191;
The research of YG was supported in part by NSFC grant 11871140,  11671071, JJKH 20180006KJ and FRFCU 2412019BJ005;
The research of YL was supported in part by NSFC grant 11571065.}

\subjclass[2000]{35L05, 35B10, 58C15}

\keywords{Wave equations;  Periodically
varying wave speed; Response solutions; Nash--Moser iteration}

\begin{abstract}

We consider a model of   nonlinear wave equations with periodically varying wave speed and periodic external forcing. By imposing non-resonance
conditions on the frequency,  we establish the existence of  the response solutions (i.e., periodic solutions with the same frequency as the forcing) for such a model in a Cantor set of asymptotically full measure. The proof relies on a  Lyapunov--Schmidt reduction together with the Nash--Moser iteration.
\end{abstract}

\maketitle

\section{Introduction}

The traditional underlying model for wave propagation, in general, might be described by  wave equations
with coefficients which are independent of time.
 However, more and more models with time-dependent coefficients have been found in the scientific community,
 such as the fading and modulation problems in telecommunications,  microwave propagation in ferrites, propagation of electromagnetic waves in time memory medium and so on.
In the present paper we are concerned with  nonlinear wave equations with  periodic time-dependent coefficient
\begin{align}\label{E1.1}
u_{tt}-a(\omega t) u_{xx}=\epsilon f(x,u)+\epsilon g(\omega t,x),\quad t\in\mathbb{R}
\end{align}
satisfying the  periodic boundary conditions. The external force term $g$ is periodically varying  with the frequency parameter $\omega \in \mathbb R^{+}$.
The time-dependent coefficient $a(\omega t)>0$ denotes the wave (or  propagation) speed. The positive parameter $\epsilon$ is  small enough. And the nonlinearity $f(x,u)$ is equal to $0$ at $u = 0$. When the wave speed $a$ is periodically varying, our purpose is to give  the existence of  periodic solutions whose frequency is consistent with the one of the periodic external forcing.

For equation \eqref{E1.1} with $\omega=1,f=0,g=0$, the linear model with time-dependent wave speed
\begin{align}\label{lp}
\textstyle u_{tt} -a(t)u_{xx}=0
\end{align}
 has been investigated as a linearized model of the Kirchhoff equation, which describes
the vibration of an elastic string \cite{kh1994}. The effect of the
time-dependent coefficient is crucial for the asymptotic behavior of the solution (\cite{Hirosawa2007Wave}). One of the main tasks for such problems is to give the precise analysis of solutions by taking into account the properties of the coefficient $a$. There have been many results from the point of view of the Cauchy problem. A pioneer consideration of local well-posedness results associated with \eqref{lp} subject to non-Lipschitz continuous coefficients  was given by Colombini et al. \cite{Colombini1979Sur}.  They described the Gevrey and the $C^\infty$ well-posedness of  \eqref{lp} with respect to H\"{o}lder and Log-Lipschitz continuity of the coefficients. In \cite{Hirosawa2007Wave,Hirosawa2010wave}, Hirosawa established the energy estimates for $d$-dimensional wave equations
\begin{align}\label{particular}
\textstyle u_{tt}-a(t)\Delta u=0,
\end{align}
corresponding to  the $C^k$ and Gevrey regularity of $a(t)$. Generally, the theory of the classical wave equation can be generalized to the one of second order linear strictly hyperbolic equations with time-dependent  coefficients, that is,
\begin{align*}
\textstyle u_{tt}-\sum^{d}_{i,j=1}a_{ij}(t) u_{x_{i}x_{j}}=0,
\end{align*}
where $\sum^{d}_{i,j=1}a_{ij}(t)\zeta_i\zeta_j\geq a_0|\zeta|^2,\forall\zeta\in\mathbb{R}^d$, with $a_0>0$. In particular, for $a_{ii}(t)=a (t)$ and $a_{ij}(t)=0, i\neq j$,  we simplify it to equation \eqref{particular}. Some recent results
on hyperbolic systems with time-dependent wave speed can be found in the literature  \cite{Reissig2011hyperbolic,Colombini2015hyperbolic}. Moreover, it is worth mentioning  that Yagdjian \cite{yagdjian2005global} considered dissipative wave equations with time-dependent coefficients
\begin{align}\label{periodic}
\textstyle u_{tt}-a(t)\Delta u+f(u)(u^2_t-a(t)\sum^n_{k=1}(u_{x_k})^2)=0, \quad x\in \mathbb {R}^d.
\end{align}
If $a(t)$ is a periodic, non-constant, smooth,
and positive function defined on $\mathbb{R}$, it does not exist  solution $u\in C^2(\mathbb{R}^+\times\mathbb{R}^d;\mathbb{R})$ to equation \eqref{periodic} for arbitrary small initial data.
This means that the oscillating coefficients
are responsible for the blow-up of the solutions.

 When the
time period is required to be a rational multiple of  the length of the spatial interval, (i.e., the  frequency $\omega$ is a rational number),  the pioneering work of ``periodic problems'' was given by Rabinowitz \cite{Rabinowitz1967periodic,rabinowitz1968periodic}, in which he established the existence of periodic solutions of forced dissipative wave equations subject to Dirichlet boundary conditions
\begin{align*}
&u_{tt}-u_{xx}+\alpha u_t=\epsilon f(t,x,u,u_x,u_t),\quad\alpha\neq0,\\
&u_{tt}-u_{xx}+\alpha u_t=\epsilon f(t,x,u,u_x,u_t,u_{xx},u_{xt},u_{tt}),\quad \alpha\neq0.
\end{align*}
 By the asymptotic expansions together with the contraction mapping theorem, Calleja et al.  \cite{calleja2017response} obtained  response solutions of several models of wave equations with quasi-periodic external forcing and very strong damping, such as the term $\frac{1}{\epsilon}u_t$ for $\epsilon$ small enough.   Another developments on response solutions for finite dimensional dynamical systems with damping and quasi-periodic forcing are for example in \cite{calleja2013construction,gentile2019forced}. In addition, we refer the readers to \cite{gauckler2016long} for wave equations with slowly varying wave speed
\begin{align*}
u_{tt}-a(\epsilon t)\Delta u=f(u),
\end{align*}
where  $\epsilon$ is  small enough. Gauckler et al. showed long-time near-conservation of the harmonic energy divided by the coefficient $a(\cdot)$  for such a model.  For  the frequency $\omega$ is rational multiple of $\pi$, Barbu--Pavel \cite{Barbu1996Periodic,Barbu1997Periodic} and  Ji--Li \cite{ji2006periodic,ji2011time} established the existence of periodic solutions to wave equations with space-dependent  coefficients.
If the  frequency $\omega$ is irrational, then it will arise  a ``small divisors problem''.
To overcome the ``small divisors problem'', a quite different approach
which used the KAM theory was developed from the
viewpoint of infinite dimensional dynamical systems by Kuksin \cite{kuksin1987hamiltonian} and Wayne
\cite{wayne1990periodic}. One can also look into \cite{Gao2009,Procesi2017KAM,Procesi2015,Eliasson2016beam} for
the application of this theory to construct periodic or quasi-periodic solutions to simi-linear PDEs related to ``small  divisors problem''.
Later, Craig--Wayne \cite{craig1993newton} and Bourgain  \cite{bourgain1994construction,bourgain1995construction} applied the Nash--Moser method to infinite-dimensional dynamical systems associated with ``small  divisors problem''. The advantage of this approach  is to require only the ``first order Melnikov'' non-resonance conditions, which are essentially the minimal assumptions. For this reason, their technique can overcome some limitations inherent to the usual KAM procedures.
Further developments on semi-linear PDEs can be found  in the articles \cite{berti2006cantor,berti2015abstract}. 

Different with the constant coefficient case ($a(t)\equiv a$) or space-dependent case ($a=a(x)$), this paper is devoted to the existence of families of small amplitude response solutions for equation \eqref{E1.1} with  the  irrational frequency. To the best of our knowledge,  there are no  result
available so far on the existence of periodic or quasi-periodic solutions of wave equations with
varying wave speed.
It turns our that the existence of periodic solution to the time-dependent coefficient wave equation is more challenging than that for the constant coefficient.
 We are faced with  some  difficulties:
\begin{itemize}
\item Arising of  a time-dependent principal operator $a( t)\partial_{xx}$,  where $a(t)$ can vary in any given bounded
interval that is bounded away from $0$.  If we do Fourier expansion as usual, it can not  appropriately  give the definition of the spectrum and the suitable  nonresonant condition. 
Furthermore,  it leads to the complexity of the  eigenvalues for the  following Hill problem
\begin{align*}
\textstyle -y''+\frac{\epsilon}{\omega^2}\Pi_{V}f'(v(\epsilon,\omega,w)+w)y=\lambda a y
\end{align*}
with respect to periodic boundary conditions.
We need to  establish the
exact relationship between the periodic coefficient $a(t)$ and the asymptotic formulae of the spectrum.
Since the influence of the asymptotic,  we have to give not only the invertibility in a appropriate space, but also the bound of the inverses of the ``truncated-restricted'' of the linearized operator associated with a range equation.
In addition, the arising  of $a(t)$ leads to that  the iteration and measure estimation are  more tedious, which
needs to give the detail analysis at every step (see Lemma \ref{lemma4.1} and  Lemma \ref{lemma4}).

\item Existence of resonant frequencies $-\omega^2\lambda^{\pm}_l+j^2$ for $ l\in\mathbb{N},j\in\mathbb{Z}$. The irrational frequency will cause the so-called ``small divisors problem''. Such a problem arises in the inversion of  the spectrum of the linear operator corresponding to equation \eqref{E1.1}, whose spectrum  has the   following form
\begin{align*}
\textstyle -\omega^2\lambda^{\pm}_l+j^2=-\omega^2\frac{l^2}{c^2}+j^2+b+O(\frac{1}{l}), \quad l\rightarrow+\infty,j\in\mathbb{Z}.
\end{align*}
Provided the assumption  $b\neq0$, the above term approaches to zero for almost every $\omega$. Generally, the inversion of the linear operator cannot map a function space into itself, but only into  a large  one with less regularity. This leads to unavailability of the classical implicit function theorem. For this reason, we need to impose the ``first order Melnikov'' non-resonance conditions, and then apply the Nash--Moser method.

\item  The invalidity of Floquet theory. Generally, it is impossible to find a transformation to convert  the wave equation  with periodic time-dependent coefficients into an equation
with constant coefficients. We can not define an appropriate  monodromy
operator (the shift along the trajectories of the system onto the period) due to the corresponding
Cauchy problem is ill-posed (see \cite{yagdjian2005global}).  It follows from the Floquet theory for PDEs  introduced by Kuchment \cite{Kuchment1982} that we cannot get the quasimomentums and Floquet exponents for our model \eqref{E1.1}.
Our proof relies on a  Lyapunov--Schmidt reduction together with the Nash--Moser iteration.
By imposing a periodic force term $g$,  the present article shows that equation \eqref{E1.1} admits families of small amplitude response solutions.  More precisely, under periodic boundary conditions and non-resonance
conditions on the frequency, when $\epsilon$ is small and $(\epsilon,\omega) $ belong to a Cantor set of positive measure, asymptotically full as $\epsilon$ tends to zero,  equation \eqref{E1.1} possesses small amplitude periodic solutions of the form $u=u(\omega t,x)$  in appropriate spaces consisted of smooth functions.

\end{itemize}

\subsection{Main result}\label{sec:result}
By the rescaling for temporal variable $t\rightarrow{t}/{\omega}$,  equation \eqref{E1.1} is equivalent to
\begin{align}\label{E1.2}
\omega^2u_{tt}-a(t) u_{xx}=\epsilon f(x,u)+\epsilon g(t,x),\quad x\in\mathbb{T}, t\in \mathbb{R}.
\end{align}
Our goal will be to look for $2\pi$-periodic solutions of the above equation.

For $s\geq0$, denote by $H^{s}$ the Sobolev spaces of real-valued functions
\begin{align}\label{E2.2}
H^{s}:=H^{s}(\mathbb{T}^2;\mathbb{R}):=\Big\{\textstyle u(t,x)=\sum_{j\in\mathbb{Z}}u_{j}(t)e^{{\rm i}j x}:u_j\in {H}^1(\mathbb{T};\mathbb{C}),u_{-j}=\overline{u_j}, \nonumber\\
\textstyle\|u\|^{2}_{s}:=\sum_{j\in\mathbb{Z}}\|u_j\|^2_{H^1}(1+ j^{2s})<+\infty\Big\},
\end{align}
where $\overline{u_j}$ is the complex conjugate of $u_j$. Moreover we introduce the space
\begin{align*}
\mathcal{C}_\ell:=\left\{f\in C(\mathbb{T}\times\mathbb{R};\mathbb{R}):(x,u)\longmapsto f(x,u)\text{ belongs to } C^\ell(\mathbb{T}\times\mathbb{R};H^1(\mathbb{T}))\right\}.
\end{align*}
For every $f\in\mathcal{C}_\ell$, if $f(x,u)=\sum_{j\in\mathbb{Z}}f_{j}(u)e^{{\rm i}j x}$ with $f_{-j}=\overline{f_j}$, then $u\longmapsto f_{j}(u)$ are in $C^{\ell}(\mathbb{R};H^{1}(\mathbb{T};\mathbb{C}))$.
\begin{rema}\label{remark1}
Let $C(s)$ be a positive constant depending on $s$. If $s>1/2$, then the space $H^s$ satisfies that for $ u,v\in H^s$,
\begin{align*}
(\mathrm{i})~\|uv\|_{{s}}\leq C(s)\|u\|_{s}\|v\|_{s}; \quad(\mathrm{ii})~\|u\|_{L^{\infty}(\mathbb{T};{H}^1(\mathbb{T}))}\leq C(s)\|u\|_{s}.
\end{align*}
\end{rema}
\begin{proof}
The proof is given in the Appendix.
\end{proof}
Let us denote
\begin{align*}
\textstyle V:={H}^1(\mathbb{T}),\quad W:=\left\{ w=\sum_{j\neq0}w_{j}(t)e^{\mathrm{i}j x}\in H^0\right\}.
\end{align*}
It is obvious that the space $H^s$ can be decomposed as the direct sum of $V$ and $W\cap H^s$, that is,
\begin{align*}
H^s=(V\cap H^s)\oplus (W\cap H^s)=V\oplus (W\cap H^s).
\end{align*}
Then the corresponding projection operators are defined by
\begin{align*}
\Pi_{V}:H^s\longrightarrow V,\quad\Pi_{W}:H^s\longrightarrow W.
\end{align*}
In addition, for every $u\in H^s$, we can write $u=v+w$, where $v\in V$ and $w\in W\cap H^s$. By implementing  the Lyapunov--Schmidt reduction with respect to the above decomposition, equation \eqref{E1.2} is equivalent to the bifurcation equation
\begin{align}
\omega^2v''=\epsilon\Pi_{V}F(v+w)+\epsilon\Pi_{V} g \label{E:bifurcation}
\end{align}
and the range equation
\begin{align}
L_{\omega}w=\epsilon\Pi_{W}F(v+w)+\epsilon\Pi_{W} g,\label{E:range}
\end{align}
where
\begin{align}\label{E1.8}
L_{\omega}w:=\omega^2w_{tt}-a(t)w_{xx},\quad F:u\longrightarrow f(x,u).
\end{align}
Observe that $f$ can be written into $f(x,u)=f_{0}(u)+\tilde{f}(x,u)$ with $\tilde{f}(x,u)=\sum_{j\neq0}f_{j}(u)e^{{\rm i}jx}$. For $w=0$, one derives
 \begin{align*}
\Pi_{V}F(v)=\Pi_{V}f(x,v)=\Pi_{V}f_{0}(v)+\Pi_{V}\tilde{f}(x,v)=f_{0}(v).
\end{align*}
Hence, if $w$ tends towards $0$, then the bifurcation equation is reduced to the space-independent equation
\begin{align*}
\omega^2v''=\epsilon f_{0}(v)+\epsilon g_{0}, \quad t\in \mathbb{T}.
\end{align*}
This is called the infinite-dimensional ``zeroth-order bifurcation equation'' introduced in \cite{berti2006cantor}. To fix ideas, we shall take $\omega$ inside an open sub-interval $(\mathcal{A}, \mathcal{B}) \subset (0, \infty)$ with $0<\mathcal{A}<\mathcal{B}<+\infty$. Without loss of generality, we consider $\omega\in (\frac{1}{2},\frac{3}{2})$. Let us suppose the following hypothesis.
\begin{assu}\label{hy1}
Let $f\in \mathcal{C}_\ell$ with $f(\cdot,0)=0$, and $x\longmapsto g(\cdot,x)$ be in $C^\ell(\mathbb{T};H^1(\mathbb{T}))$. For any $\omega\in (\frac{1}{2},\frac{3}{2})$, there exists a positive constant $\epsilon_0$  small enough such that for all $\epsilon\in[0,\epsilon_0]$, the problem
\begin{align}\label{E1.6}
\omega^2v''(t)=\epsilon f_{0}(v(t))+\epsilon g_0,\quad t\in\mathbb{T}
\end{align}
admits a non-degenerate solution $\hat{{v}}\in {H}^1(\mathbb{T})$ with $\int_{\mathbb{T}}\hat{v}(t)\mathrm{d}t=0$.
\end{assu}
The above hypothesis means that the linearized equation
\begin{align}\label{E1.7}
\omega^2h''=\epsilon f'_0(\hat{{v}})h, \quad h\in H^1(\mathbb{T}),
\end{align}
with $\int_{\mathbb{T}}h(t)\mathrm{d}t=0$, possesses only the trivial solution $h=0$ in $H^1(\mathbb{T})$. In fact, since $\int_{\mathbb{T}}h(t)\mathrm{d}t=0$, equation \eqref{E1.7} with $\epsilon=0$ possesses only the trivial solution $h=0$. Hence $\hat{v}=0$ is the non-degenerate solution  of \eqref{E1.6} with $\epsilon=0$. In view of the implicit function theorem, there exists $\epsilon_0>0$ such that for all $\epsilon\in[0,\epsilon_0]$, Hypothesis \ref{hy1} follows.

For fixed $\chi>1$, $\tau\in(1,2)$, and $\sigma=\frac{\tau(\tau-1)}{(2-\tau)}$, we denote
\begin{align}\label{E5.35}
\textstyle \beta:=\chi(\tau-1+\sigma)+\chi(2\tau+2)+\frac{\chi}{\chi-1}(\tau-1+\sigma).
\end{align}
The next theorem corresponds to a result on the existence of periodic solutions for the  wave equation subject to periodic external forcing.
\begin{theo}\label{Th1}
For fixed $\hat{\epsilon}\in[0,\epsilon_0]$, supposed that Hypothesis \ref{hy1} could hold.  Let $\tau\in(1,2)$, $\gamma\in(0,1)$, $\ell\geq s+\beta+3$, $a\in H^3(\mathbb{T})$ with $a(t)>0$, and the nonlinearity $f\in \mathcal{C}_\ell$ with $f(\cdot,0)=0$. If $x\longmapsto g(\cdot,x)$ is in $C^\ell(\mathbb{T};H^1(\mathbb{T}))$, then there exists a positive constant $\delta$ small enough, a neighborhood $(\epsilon_1,\epsilon_2)$ of $\hat{\epsilon}$, $0<r<1$, a $C^2$ mapping $v(\epsilon,\omega,w)$ on $(\epsilon_1,\epsilon_2)\times(\frac{1}{2},\frac{3}{2})\times\left\{w\in W\cap H^s:\|w\|_{s}< r\right\}$, with values in $V$, a mapping ${w}(\epsilon,\omega)\in W\cap H^s$,
and a Cantor set $A_{\gamma}\subset (0,\delta\gamma^2)\times(\frac{1}{2},\frac{3}{2})$, such that for all $(\epsilon,\omega)\in A_\gamma$,
\begin{align*}
{u}(\epsilon,\omega):=v(\epsilon,\omega,{w}(\epsilon,\omega))+{w}(\epsilon,\omega)\in V\oplus (W\cap H^s),
\end{align*}
with $\int_{\mathbb{T}^2}u(\epsilon,\omega)(t,x)\mathrm{d}t\mathrm{d}x=0$, is a solution of equation \eqref{E1.2} and satisfies that for all $x\in\mathbb{T}$, ${u}(\cdot,x)\in {H}^3(\mathbb{T})$.
\end{theo}

\subsection{Solutions of the bifurcation equation}\label{sec:birfucation}
In the following we will solve the bifurcation equation \eqref{E:bifurcation}, which corresponds to equation \eqref{E1.2}. The proof relies on the classical implicit function theorem under Hypothesis \ref{hy1}.
\begin{lemm}\label{lemma3.1}
Let Hypothesis \ref{hy1} hold for fixed $\hat{\epsilon}\in[0,\epsilon_0]$.  Then there is a neighborhood $(\epsilon_1,\epsilon_2)$ of $\hat{\epsilon}$, $0<r<1$, and a $C^2$ mapping
\begin{align*}
\textstyle v:(\epsilon_1,\epsilon_2)\times(\frac{1}{2},\frac{3}{2})\times\left\{w\in W\cap H^s:\|w\|_{s}< r\right\}\longrightarrow H^1(\mathbb{T}),\quad(\epsilon,\omega,w)\longmapsto v(\epsilon,\omega,w)
\end{align*}
such that $v(\epsilon,\omega,w)$ is a solution to equation \eqref{E:bifurcation} and satisfies $\int_{\mathbb{T}}v(\epsilon,\omega,w)(t)\mathrm{d}t=0$ and
\begin{align}\label{E2.14}
\| v(\epsilon,\omega,w)-v(\epsilon,\omega,0)\|_{H^1}\leq C\|w\|_s,\quad\|v(\epsilon,\omega,0)-\hat{v} \|_{H^1}\leq C(|\epsilon-\hat{\epsilon}|+|\omega-\hat{\omega}|).
\end{align}
\end{lemm}
\begin{proof}
If $x\longmapsto g(\cdot,x)$ is in $C^\ell(\mathbb{T};H^1(\mathbb{T}))$, from both the fact $f\in\mathcal{C}_\ell$ and Lemma \ref{lemma2.4}, then the mapping
\begin{align*}
(\epsilon,\omega,w,v)\longmapsto \omega^2v''-\epsilon\Pi_{V}F(v+w)-\epsilon\Pi_{V} g
\end{align*}
belongs to $C^2((\epsilon_1,\epsilon_2)\times(\frac{1}{2},\frac{3}{2})\times(W\cap H^s)\times H^1(\mathbb{T});H^1(\mathbb{T}))$. Moreover, according to Hypothesis \ref{hy1}, the linearized operator $h\longmapsto \omega^2h''-\hat{\epsilon}f'_0(\hat{v})h$, with $\int_{\mathbb{T}}h(t)\mathrm{d}t=0$, is invertible on $H^1(\mathbb{T})$. Hence the implicit function theorem shows that there is a $C^2$-path $(\epsilon,\omega,w)\longmapsto v(\epsilon,\omega,w)$, with $\int_{\mathbb{T}}v(\epsilon,\omega,w)(t)\mathrm{d}t=0$ and $v(\hat{\epsilon},\omega,0)=\hat{v}$, such that $v(\epsilon,\omega,w)$ is a solution of the bifurcation equation. In addition, it is clear that two terms in \eqref{E2.14} follow.
\end{proof}

If we set $\mathcal{F}(\epsilon,\omega,w):=F(v(\epsilon,\omega,w)+w)$, then the range
equation \eqref{E:range} can be rewritten as
\begin{align}\label{E:rang22}
L_{\omega}w=\epsilon \Pi_{W}\mathcal{F}(\epsilon,\omega,w)+\epsilon\Pi_{W} g.
\end{align}
Our task now is to prove the existence of solutions to equation \eqref{E:rang22} in the space $W\cap H^s$. The proof is based on the Nash--Moser iteration. Based on this method, the key is to investigate the invertibility of the corresponding linearized operator.

The  paper is structured as follows: We obtain the invertibility and the bound of the inverses of the ``truncated-restricted'' of the linearized operator with respect to the range equation in section \ref{sec:invertibility}. The aim of  section \ref{sec:range} is to construct
  solution of the rang equation. In section \ref{sec:proof}, we give the proof of the main result.

\section{Linearized operator}\label{sec:invertibility}
The purpose of the section is to show the invertibility  of the ``truncated-restricted'' of the linearized operator associated with  the range equation and give the bound of its inverses.

 Consider the orthogonal splitting $W=W_{N}\oplus W_{N}^{\bot}$, where
\begin{align*}
\textstyle W_{N}:=\left\{w\in W:w=\sum_{1\leq|j|\leq N}w_{j}(t)e^{\mathrm{i}j x}\right\},\quad W_{N}^{\bot}:=\left\{w\in W:w=\sum_{|j|> N}w_{j}(t)e^{\mathrm{i}j x}\right\}.
\end{align*}
For $N\in\mathbb{N}\backslash \{0\}$, the corresponding projectors
$\mathrm{P}_{N}:W\longrightarrow W_{N}, \mathrm{P}^{\bot}_{N}:W\longrightarrow W_{N}^{\bot}$ satisfy
\begin{align}
&\|\mathrm{P}_{N}w\|_{s+r}\leq N^r\|w\|_{s},\quad\forall w\in W\cap H^s,\label{E4.52}\\
&\|\mathrm{P}^{\bot}_{N}w\|_{s}\leq N^{-r}\|w\|_{{s+r}},\quad\forall w\in W\cap H^{s+r}.\label{E4.53}
\end{align}
Moreover, let us define the ``truncated-restricted'' of the linearized operator by
\begin{align}\label{E4.32}
\mathcal{L}_{N}(\epsilon,\omega,w)h:=-L_{\omega}h+\epsilon \mathrm{P}_{N}\Pi_{W}\mathrm{D}_{w}\mathcal{F}(\epsilon,\omega,w)[h],\quad
h\in W_{N},
\end{align}
where ${\rm D}_{w} \mathcal{F} $ is the Fr\'{e}chet derivative of $\mathcal{F}$ with respect to $w$, and $L_{\omega}$ is given by \eqref{E1.8}. Let $\lambda^{\pm}_{l}(\epsilon,\omega,w),l\geq0$ denote the eigenvalues for the Hill problem
\begin{align}\label{E6.1}
\begin{cases}
-y''+\frac{\epsilon}{\omega^2}\Pi_{V}f'(v(\epsilon,\omega,w)+w)y=\lambda a  y,\\
y(t)=y(t+2\pi),\quad y'(t)=y'(t+2\pi).
\end{cases}
\end{align}
Note that $\lambda_{0}(\epsilon,\omega,w)$ is denoted by $\lambda^{\pm}_{0}(\epsilon,\omega,w)$ for simplicity. Set
\begin{align}\label{E6.23}
\textstyle c:=\frac{1}{2\pi}\int_{\mathbb{T}}(a(t))^{\frac12}\mathrm{d}t.
\end{align}
For fixed $\gamma\in(0,1),\tau\in(1,2)$,  define
\begin{align*}
\Delta^{\gamma,\tau}_{N}(w):=\Big\{\textstyle(\epsilon,\omega)\in(\epsilon_1,\epsilon_2)\times(\frac{1}{2},\frac{3}{2}): {\epsilon}<\delta\gamma^2,~|\omega(\lambda^{\pm}_{l}(\epsilon,\omega,w))^{\frac{1}{2}}-j|>\frac{\gamma}{j^{\tau}},\nonumber\\
 \textstyle|\frac{\omega{l}}{c}-{j}|>\frac{\gamma}{j^{\tau}},\forall 1\leq j\leq N,\forall l\geq0\Big\}.
\end{align*}
Remark that
\begin{align*}
(\lambda^{\pm}_{l}(\epsilon,\omega,w))^{\frac{1}{2}}=
\mathrm{i}(-\lambda^{\pm}_{l}(\epsilon,\omega,w))^{\frac{1}{2}},\quad \text{if } \lambda^{\pm}_{l}(\epsilon,\omega,w)<0.
\end{align*}
The non-resonance conditions in the set $\Delta^{\gamma,\tau}_{N}(w)$ are trivially satisfied for  $\lambda^{\pm}_{l}(\epsilon,\omega,w)<0$.

\begin{lemm}\label{lem:invertibility}
For $(\epsilon,\omega)\in\Delta^{\gamma,\tau}_{N}(w)$, there exist positive constants $K, K(s')$ such that if
\begin{align}\label{E0.17}
\textstyle \|w\|_{{s+\sigma}}\leq1,\quad \text{with }\sigma=\frac{\tau(\tau-1)}{(2-\tau)},
\end{align}
 then the operator $\mathcal{L}_{N}(\epsilon,\omega,w)$ is invertible satisfying that for all $s'\geq s>1/2$,
\begin{align*}
&\|\mathcal{L}^{-1}_{N}(\epsilon,\omega,w)h\|_{s}\leq{K}{\gamma}^{-1}N^{\tau-1}\|h\|_{s},\\
&\|\mathcal{L}^{-1}_{N}(\epsilon,\omega,w)h\|_{{s'}}\leq {K(s')}{\gamma}^{-1}N^{\tau-1}\left(\|h\|_{{s'}}+\|w\|_{{s'+\sigma}}\|h\|_{s}\right).
\end{align*}
\end{lemm}
Lemma \ref{lem:invertibility} will be used in next section to prove Lemma \ref{lemma4.1}. The corresponding proof is divided into the following steps.

\subsection{Periodic boundary value problem}
We first propose the asymptotic formulae of  the spectrum associated with  the Hill problem \eqref{E6.1}. Denote
\begin{align}\label{E6.36}
\textstyle\mathrm{g}(t):=\frac{\epsilon}{\omega^2}\Pi_{V}f'(x,v(\epsilon,\omega,w(t,x))+w(t,x)),\text{ with }\mathrm{g}\in L^2(\mathbb{T}).
\end{align}
 Since $a(\cdot)$ is $2\pi$-periodic,
if $t=2\pi k+t'$ with $k\in\mathbb{Z}, t'\in[0,2\pi]$, by omitting the $'$ superscript of $t'$, then we just consider
\begin{align}\label{Hill}
\begin{cases}
-y''+\frac{\epsilon}{\omega^2}\Pi_{V}f'(v(\epsilon,\omega,w)+w)y=\lambda a  y,\\
y(0)=y(2\pi),\quad y'(0)=y'(2\pi).
\end{cases}
\end{align}
Let $c$ be as seen in \eqref{E6.23}. Making the Liouville substitution
\begin{align}\label{E6.26}
\textstyle t=\psi(\xi)\in[0,2\pi]\Longleftrightarrow\xi=\phi(t)\in[0,2\pi],\text{ with }\phi(t):=\frac{1}{c}\int^{t}_{0}(a(s))^{\frac12}\mathrm{d}s,
\end{align}
gives that $\lambda$ and $y(\psi(\xi))$ satisfy
\begin{align*}
\begin{cases}
-y''(\psi(\xi))-c\eta(\psi(\xi)) y'(\psi(\xi))+c^2\frac{\mathrm{g(\psi(\xi))}}{a (\psi(\xi))}y(\psi(\xi))=c^2\lambda y(\psi(\xi)),\\
y(0)=y(2\pi),\quad y'(0)=y'(2\pi),
\end{cases}
\end{align*}
where $\eta(t):=\frac{1}{2}a'(t)(a(t))^{-\frac32}$. 
We further make the Liouville transformation
\begin{align}\label{E6.35}
\textstyle y={z}/\upsilon\quad \text{ with }\upsilon(t)=(a(t))^{\frac{1}{4}}.
\end{align}
Then
\begin{align}\label{E6.42}
\begin{cases}
-z''(\xi)+\varrho(\xi)z(\xi)=\mu z(\xi),\\
z(0)=z(2\pi),\quad z'(0)=z'(2\pi),
\end{cases}
\end{align}
where
\begin{align}
&\textstyle \varrho(\xi)=q(\xi)+\theta(\xi),\label{E7.1}\\
&\textstyle q(\xi)=c^2Q(\psi(\xi)),\text{ with } Q(t)=\frac{a''(t)}{4(a(t))^2}-\frac{5(a'(t))^2}{16(a(t))^3},\label{E7.2}\\
&\textstyle \theta(\xi)=c^2\frac{\mathrm{g}(\psi(\xi))}{a(\psi(\xi))},\quad\mu=c^2\lambda.\label{E6.6}
\end{align}
The following Lemmata \ref{le:sequence}--\ref{lemma6.3} can be found  in the literature \cite{marchenko1977sturm,Levitan1991,titchmarsh1958eigenfunction}.
\begin{lemm}\label{le:sequence}
Denote by $\mu_0,\mu^{-}_{1},\mu^{+}_{1},\cdots,\mu^{-}_{l},\mu^{+}_{l},\cdots$ and  $\varphi_0$, $\varphi^{-}_1$, $\varphi^{+}_1,\cdots,\varphi^{-}_{l}$, $\varphi^{+}_{l},\cdots$, respectively, the eigenvalues and the corresponding real orthonormal eigenfunctions of  \eqref{E6.42}. Then the above eigenvalues are arranged as an increasing unbounded sequence $\mu_0<\mu^{-}_{1}\leq\mu^{+}_{1}<\cdots<\mu^{-}_{l}\leq\mu^{+}_{l}<\cdots$ with $\mu^{\pm}_{l}\rightarrow+\infty$ as $l\rightarrow+\infty$ so that if the equality sign is present, then the corresponding eigenvalue is double. In addition, the zeros of an eigenfunction on the segment $[0,2\pi)$ are equal to $2l$, where $l$ is the number of the corresponding eigenvalue.
\end{lemm}
In fact, formula \eqref{E7.1} shows that the eigenvalues $\mu^{\pm}_l$ of \eqref{E6.42} depend on $\theta(\cdot)$ (remark that $\mu_0$ is denoted by $\mu^\pm_0$ for convenience). However we do not write $\theta(\cdot)$.
Let us give the asymptotic formulae of the spectrum associated with  \eqref{E6.42}. Moreover, we denote $\varphi_{0}$ by $\varphi^{\pm}_{0}$ for brevity.
\begin{lemm}[Asymptotic formulae]\label{lemma6.3}
Let $\mu^{\pm}_0,\mu^{-}_{1},\mu^{+}_{1},\cdots,\mu^{-}_{l},\mu^{+}_{l},\cdots$ and  $\varphi^{\pm}_0$, $\varphi^{-}_1$, $\varphi^{+}_1,\cdots,\varphi^{-}_{l}$, $\varphi^{+}_{l},\cdots$ stand for the eigenvalues and the corresponding real orthonormal eigenfunctions of \eqref{E6.42}, respectively. Then the following asymptotic formulae hold:
\begin{align*}
\textstyle\mu^{\pm}_l=l^2+\frac{1}{2\pi}\int^{2\pi}_0\varrho(\xi)\mathrm{d}\xi\pm\frac{1}{2}\left(a^2_l+b^2_l\right)^{\frac{1}{2}}+O\left(\frac{1}{l}\right)
\end{align*}
as $l\rightarrow+\infty$, where
\begin{align*}
\textstyle a_l:=\frac{1}{\pi}\int^{2\pi}_0\varrho(\xi)\cos(2l\xi)\mathrm{d}\xi, \quad b_l:=\frac{1}{\pi}\int^{2\pi}_0\varrho(\xi)\sin(2l\xi)\mathrm{d}\xi.
\end{align*}
\end{lemm}
Denote by $\lambda_{0}(\epsilon,\omega, w)$, $\lambda^{-}_{1}(\epsilon,\omega, w)$, $\lambda^{+}_{1}(\epsilon,\omega, w)$, $\cdots$,  $\lambda^{-}_{l}(\epsilon,\omega, w)$, $\lambda^{+}_{l}(\epsilon,\omega, w)$, $\cdots$ and $\psi_{0}(\epsilon,\omega, w)$, $\psi^{-}_{1}(\epsilon,\omega, w)$, $\psi^{+}_{1}(\epsilon,\omega, w)$, $\cdots$, $\psi^{-}_{l}(\epsilon,\omega, w)$, $\psi^{+}_{l}(\epsilon,\omega, w)$, $\cdots$ the eigenvalues and the corresponding real orthonormal eigenfunctions of the Hill problem \eqref{E6.1}, respectively. For brevity, we denote $\lambda_{0}(\epsilon,\omega, w)$ and $\psi_0(\epsilon,\omega, w)$ by $\lambda^{\pm}_{0}(\epsilon,\omega, w)$ and $\psi^{\pm}_0(\epsilon,\omega, w)$, respectively.
\begin{lemm}\label{lemma6.1}
Let $c$ be as seen in \eqref{E6.23}. For all $\epsilon\in(\epsilon_1,\epsilon_2), \omega\in(\frac{1}{2},\frac{3}{2}),w\in\{W\cap H^s:\|w\|_{s}< r\}$,  the eigenvalues of the Hill problem \eqref{E6.1} satisfy
\begin{align}\label{E0.1}
\lambda^{\pm}_{0}(\epsilon,\omega,w)&<\lambda^{-}_{1}(\epsilon,\omega,w)\leq\lambda^{+}_{1}(\epsilon,\omega,w)<\cdots<\lambda^{-}_{l}(\epsilon,\omega,w)\leq\lambda^{+}_{l}(\epsilon,\omega,w) < \cdots
\end{align}
with $\lambda^{\pm}_{l}(\epsilon,\omega,w)\rightarrow+\infty$ as $l\rightarrow+\infty$, and
\begin{align}\label{E6.24}
\lambda^{\pm}_l(\epsilon,\omega,w)=&\textstyle \frac{l^2}{c^2}+\frac{1}{2\pi c}\int_{\mathbb{T}}\textstyle\Gamma(\epsilon,\omega,w)( t)(a(t))^{\frac12}\mathrm{d}t+O\left(\frac{1}{l}\right)
\end{align}
as $l\rightarrow+\infty$, with
\begin{align*}
\textstyle\Gamma(\epsilon,\omega,w)(t):=Q(t)+\frac{\epsilon}{\omega^2a(t)}\Pi_{V}f'(x,v(\epsilon,\omega,w(t,x))+w(t,x)),
\end{align*}
where $Q$ is given by \eqref{E7.2}.

The eigenfunctions $\psi^{\pm}_{l}(\epsilon,\omega,w)$ form an orthogonal basis of $L^2(\mathbb{T})$ subject to the scalar product
\begin{align*}
\textstyle(y,\mathfrak{z})_{L^2_{a}}:=\frac{1}{c}\int_{\mathbb{T}}a y\mathfrak{z}\mathrm{d}t.
\end{align*}
Moreover, for $\Theta>0$ large enough, we can define an equivalent scalar product $(\cdot,\cdot)_{\epsilon,\omega,w}$ on ${H}^1(\mathbb{T})$ by
\begin{align*}
\textstyle(y,\mathfrak{z})_{\epsilon,\omega,w}:=\frac{1}{c}\int_{\mathbb{T}}\left(y'\mathfrak{z}'+\frac{\epsilon}{\omega^2}\Pi_{V}f'(v(\epsilon,\omega,w)+w)y\mathfrak{z}+\Theta ay\mathfrak{z}\right)\mathrm{d}t
\end{align*}
with, for  $L_1>0,L_2>0$,
\begin{align}\label{E6.3}
L_1\|y\|_{H^1}\leq\|y\|_{\epsilon,\omega,w}\leq L_2\|y\|_{H^1},\quad y\in{H}^1(\mathbb{T}).
\end{align}
Then $\psi^{\pm}_{l}(\epsilon,\omega,w)$ are an orthogonal basis of ${H}^1(\mathbb{T})$
 with respect to the scalar product $(\cdot,\cdot)_{\epsilon,\omega,w}$ as well. Furthermore, one has that for $ y=\hat{y}^{\pm}_0\psi^{\pm}_{0}(\epsilon,\omega,w)+\sum_{l\geq1}\hat{y}^{+}_l\psi^{+}_{l}(\epsilon,\omega,w)+\hat{y}^{-}_l\psi^{-}_{l}(\epsilon,\omega,w)$,
\begin{align}\label{E6.4}
&\textstyle\|y\|^2_{L^2_{a}}=(\hat{y}^{\pm}_0)^2+\sum_{l\geq1}(\hat{y}^{+}_l)^2+(\hat{y}^{-}_l)^2,\\
&\textstyle\|y\|^2_{\epsilon,\omega,w}=(\lambda^{\pm}_0(\epsilon,\omega,w)+\Theta)(\hat{y}^{\pm}_0)^2+\sum_{l\geq1}(\lambda^{+}_l(\epsilon,\omega,w)+\Theta)(\hat{y}^{+}_l)^2+(\lambda^{-}_l(\epsilon,\omega,w)+\Theta)(\hat{y}^{-}_l)^2,\label{scalar}
\end{align}
where $\hat{y}^{\pm}_0$ stands for $\hat{y}_0$ for simplicity and $\lambda^{\pm}_l(\epsilon,\omega,w)+\Theta>0$.
\end{lemm}
\begin{proof}
From \eqref{E6.6}, it is easy to see that $\mu^{\pm}_l(\theta)=c^2\lambda^{\pm}_l(\mathrm{g})$. In view of Lemma \ref{le:sequence}, we can derive \eqref{E0.1}. Moreover, by Lemma \ref{lemma6.3} and the inverse Liouville substitution in \eqref{E6.26}, the eigenvalues of \eqref{Hill} satisfy
\begin{align*}
\lambda^{\pm}_l(\epsilon,\omega,w)=&\textstyle \frac{l^2}{c^2}+\frac{1}{2\pi c}\int^{2\pi}_0\textstyle\Gamma(\epsilon,\omega,w)( t)(a(t))^{\frac12}\mathrm{d}t\pm\textstyle\frac{1}{2}\left(\Lambda^2_l(\epsilon,\omega,w)+\Upsilon^2_l(\epsilon,\omega,w)\right)^{\frac{1}{2}}+O\left(\frac{1}{l}\right)
\end{align*}
as $l\rightarrow+\infty$, where
\begin{align*}
&\textstyle \Lambda_l(\epsilon,\omega,w):=\frac{1}{\pi c}\int^{2\pi}_0\Gamma(\epsilon,\omega,w)( t)\cos\left(2l\left(\frac{1}{c}\int^{t}_{0}(a(s))^{\frac12}\mathrm{d}s\right)\right)(a(t))^{\frac12}\mathrm{d}t, \\
&\textstyle \Upsilon_l(\epsilon,\omega,w):=\frac{1}{\pi c}\int^{2\pi}_0\Gamma(\epsilon,\omega,w)( t)\sin\left(2l\left(\frac{1}{c}\int^{t}_{0}(a(s))^{\frac12}\mathrm{d}s\right)\right)(a(t))^{\frac12}\mathrm{d}t.
\end{align*}
Since
\begin{align*}
\textstyle|\Lambda_l(\epsilon,\omega,w)|\leq\frac{\|\Gamma(\epsilon,\omega,w)\|_{H^1}}{l},\quad |\Upsilon_l(\epsilon,\omega,w)|\leq\frac{\|\Gamma(\epsilon,\omega,w)\|_{H^1}}{l},
\end{align*}
it can be seen that \eqref{E6.24} holds.

Observe that the eigenfunctions $\varphi^{\pm}_{l}(\epsilon,\omega,w)$ (recall Lemma \ref{le:sequence}) form an orthonormal basis for $L^2(0,2\pi)$. Then it follows from the Liouville substitution \eqref{E6.35} that
\begin{align*}
\varphi^{\pm}_{l}(\epsilon,\omega,w)=\psi^{\pm}_l(\epsilon,\omega,w)\upsilon,\text{ with }\upsilon(t)=(a(t))^{\frac{1}{4}}.
\end{align*}
Combining this with the  inverse Liouville substitution in \eqref{E6.26} gives that
\begin{align*}
\textstyle\int_{0}^{2\pi}\varphi^{\pm}_{l}(\epsilon,\omega,w)(\xi)\varphi^{\pm}_{l}(\epsilon,\omega,w)(\xi)\mathrm{d}\xi=\textstyle\frac{1}{c}\int_{0}^{2\pi}\psi^{\pm}_l(\epsilon,\omega,w)(t)\psi^{\pm}_l(\epsilon,\omega,w)(t)a(t)\mathrm{d}t.
\end{align*}
Hence $\psi^{\pm}_{l}(\epsilon,\omega,w)$ form an orthogonal basis for $L^2(0,2\pi)$ with respect to the scalar product $(\cdot,\cdot)_{L^2_a}$. In addition, a simple calculation can read that \eqref{E6.3}--\eqref{E6.4} follow. Notice that
\begin{align*}
\textstyle-(\psi^{+}_l(\epsilon,\omega,w))''+\frac{\epsilon}{\omega^2}\Pi_{V}f'(v(\epsilon,\omega,w)+w)\psi^{+}_l(\epsilon,\omega,w)
+\Theta a\psi^{+}_l(\epsilon,\omega,w)&\\=(\lambda^{+}_l(\epsilon,\omega,w)+\Theta)a\psi^{+}_l(\epsilon,\omega,w)&,\\
\textstyle-(\psi^{-}_l(\epsilon,\omega,w))''+\frac{\epsilon}{\omega^2}\Pi_{V}f'(v(\epsilon,\omega,w)+w)\psi^{-}_l(\epsilon,\omega,w)
+\Theta a\psi^{-}_l(\epsilon,\omega,w)&\\
=(\lambda^{-}_l(\epsilon,\omega,w)+\Theta)a\psi^{-}_l(\epsilon,\omega,w)&,
\end{align*}
 Multiplying the above equality  by $\psi^{\pm}_{l'}(\epsilon,\omega,w)$ and integrating by parts yield that 
\begin{align*}
&(\psi^{+}_l,\psi^{+}_{l'})_{\epsilon,\omega,w}=\delta_{l,l'}(\lambda^{+}_l(\epsilon,\omega,w)+\Theta),\quad(\psi^{-}_l,\psi^{-}_{l'})_{\epsilon,\omega,w}=\delta_{l,l'}(\lambda^{-}_l(\epsilon,\omega,w)+\Theta),\\
&(\psi^{+}_l,\psi^{-}_{l'})_{\epsilon,\omega,w}=(\psi^{-}_l,\psi^{+}_{l'})_{\epsilon,\omega,w}=0\quad \text{if\quad either $l\neq0$ or $l'\neq0$}.
\end{align*}
Thus formula \eqref{scalar} holds. We now complete the proof of the lemma.
\end{proof}

Furthermore we need to introduce the following lemma.
\begin{lemm}
For all $\epsilon',\epsilon''\in(\epsilon_1,\epsilon_2),\omega',\omega''\in(\frac{1}{2},\frac32)$ and $w',w''\in W\cap H^s$, there is some positive constant $\kappa$ such that  the eigenvalues of \eqref{E6.1} satisfy
\begin{align}\label{E:continue}
\textstyle|\lambda^{\pm}_{l}(\epsilon',\omega',w')-\lambda^{\pm}_{l}(\epsilon'',\omega'',w'')|\leq
\kappa(|\epsilon'-\epsilon''|+|\omega'-\omega''|+\|w'-w''\|_s),\quad l\geq0.
\end{align}
In particular, for fixed $\epsilon\in(\epsilon_1,\epsilon_2)$, one has
\begin{align}\label{E:continue2}
\textstyle|\lambda^{\pm}_{l}(\epsilon,\omega',w')-\lambda^{\pm}_{l}(\epsilon,\omega'',w'')|\leq
\epsilon\kappa(|\omega'-\omega''|+\|w'-w''\|_s),\quad l\geq0.
\end{align}
\end{lemm}
\begin{proof}
Observe that $\mathrm g\in {H}^1(\mathbb T)\subseteq C(\mathbb T)$, where $\mathrm g$ is given in \eqref{E6.36}. For $l\in\mathbb{N}$, let $\psi^{\pm}_{l}(\mathrm g)$ denote the eigenfunctions of the  Hill problem \eqref{E6.1} with respect to $\lambda^{\pm}_{l}(\mathrm g)$. Since the coefficients $\mathrm{g}, a$ in \eqref{E6.1} satisfy the assumptions of Theorem 4.2 in \cite{zettl1996eigenvalues}, we obtain
\begin{align*}
\textstyle \mathrm{D}_{\mathrm g}\lambda^{\pm}_{l}(\mathrm g)[h]=\int_{\mathbb{T}}(\psi^{\pm}_{l}(\mathrm g))^2 h \mathrm{d}t.
\end{align*}
Then it follows from Lemma  \ref{lemma2.4}, Lemma \ref{lemma3.1} and Lemma \ref{lemma6.1} that
\begin{align*}
\textstyle|\lambda^{\pm}_{l}(\mathrm{g}_1)-\lambda^{\pm}_{l}(\mathrm{g}_2)|=&\textstyle|\int^{1}_{0}\int_{\mathbb{T}}
(\psi^{\pm}_{l}(\mathrm{g}_1+\mathfrak{v}(\mathrm{g}_2-\mathrm{g}_1)))^2(\mathrm{g}_1-\mathrm{g}_2) \mathrm{d}t\mathrm{d}\mathfrak{v}|\\
\leq&\textstyle\max_{\mathfrak{v}\in[0,1]}\left|\int_{\mathbb{T}}(\psi^{\pm}_{l}(\mathrm{g}_1+\mathfrak{v}(\mathrm{g}_2-\mathrm{g}_1)))^2
(\mathrm{g}_1-\mathrm{g}_2) \mathrm{d}t\right|\\
\leq&\textstyle\|\frac{c(\mathrm{g}_1-\mathrm{g}_2)}{a}\|_{L^{\infty}(\mathbb{T})}\max_{\mathfrak{v}\in[0,1]}
|\frac{1}{c}\int_{\mathbb{T}}(\psi^{\pm}_{l}(\mathrm{g}_1+\mathfrak{v}(\mathrm{g}_2-\mathrm{g}_1)))^2 a\mathrm{d}t| \\
\leq&\textstyle C\|\mathrm{g}_1-\mathrm{g}_2\|_{H^1(\mathbb T)}{\leq}\kappa(|\epsilon'-\epsilon''|+|\omega'-\omega''|+\|w'-w''\|_s),
\end{align*}
where
\begin{align*}
\textstyle\mathrm{g}_1=\frac{\epsilon'}{{\omega'}^2}\Pi_{V}f'(v(\epsilon',\omega',w')+w'),\quad \mathrm{g}_2=\frac{\epsilon''}{{\omega''}^2}\Pi_{V}f'(v(\epsilon'',\omega'',w'')+w'').
\end{align*}
In particular, for $\epsilon'=\epsilon''=\epsilon$, it can be seen that \eqref{E:continue2} holds.
We have thus obtained the conclusion of the lemma.
\end{proof}

By Hypothesis \ref{hy1}, the non-degeneracy of $\hat{v}=v(\hat{\epsilon},\omega,0)$ means that $\lambda^{\pm}_{l}(\hat{\epsilon},\omega, 0)\neq0,\forall\omega\in(\frac{1}{2},\frac32)$.
Then it follows from \eqref{E:continue} that
\begin{align}\label{E:min}
\textstyle\Gamma_0:=\inf\left\{|\lambda^{\pm}_l(\epsilon,\omega,w)|:~l\geq0,\epsilon\in[\epsilon_1,\epsilon_2],\omega\in[\frac{1}{2},\frac{3}{2}],\|w\|_s\leq r\right\}>0.
\end{align}
If necessary, here we may take that $|\epsilon_2-\epsilon_1|$ and $r$ are smaller than ones  in Lemma \ref{lemma3.1}.

Moreover, due to \eqref{E6.3} and \eqref{scalar} in Lemma \ref{lemma6.1}, we present that the $s$-norm restricted to $W\cap H^s$ has an equivalent norm. More precisely, if we decompose $w$ as
\begin{align*}
\textstyle\sum_{j}\hat{w}^{\pm}_{j,0}\psi^{\pm}_{0}(\epsilon,\omega,w)e^{\mathrm{i}jx}+\sum_{l\geq1,j}(\hat{w}^{+}_{j,l}\psi^{+}_{l}(\epsilon,\omega,w)+\hat{w}^{-}_{j,l}\psi^{-}_{l}(\epsilon,\omega,w))e^{\mathrm{i}jx},
\end{align*}
then
\begin{align}\label{E5.26}
\textstyle L^2_1\|w\|^2_{s}&\leq\textstyle\sum_{j\neq0}(\lambda^{\pm}_{0}(\epsilon,\omega,w)+\Theta)(\hat{w}^{\pm}_{j,0})^2(1+j^{2s})\nonumber\\
&+\textstyle\sum_{j\neq0,l\geq1}((\lambda^{+}_{l}(\epsilon,\omega,w)+\Theta)(\hat{w}^{+}_{j,l})^2+(\lambda^{-}_{l}(\epsilon,\omega,w)+\Theta)(\hat{w}^{-}_{j,l})^2)(1+j^{2s})\leq L^2_2\|w\|^2_{s}.
\end{align}
where $\hat{w}_{j,0}$ is denoted by $\hat{w}^{\pm}_{j,0}$ for brevity. Our task now is to investigate the invertibility and the bound of the inverses of the ``truncated-restricted'' of the linearized operator subject to equation \eqref{E:rang22}.

\subsection{Invertibility of the linearized operator}

We can write  the ``truncated-restricted'' of the linearized operator as
\begin{align*}
\mathcal{L}_{N}(\epsilon,\omega,w)h:=&-L_{\omega}h+\epsilon \mathrm{P}_N\Pi_{W}\mathrm{D}_{w}F(v(\epsilon,\omega,w)+w)[h]=\mathfrak{L}_1h+\mathfrak{L}_2h,\quad h\in W_{N},
\end{align*}
where $L_{\omega}$ is as seen in \eqref{E1.8}, and
\begin{align}
&\mathfrak{L}_1h:=-L_{\omega}h+\epsilon \mathrm{P}_N\Pi_Wf'(v(\epsilon,\omega,w)+w)h,\nonumber\\
&\mathfrak{L}_2h:=\epsilon \mathrm{P}_N\Pi_Wf'(v(\epsilon,\omega,w)+w)\mathrm{D}_{w}v(\epsilon,\omega,w)[h].\nonumber
\end{align}
Let  $b(t,x):=f'(x,v(\epsilon,\omega,w(t,x))+w(t,x))$. For $\|w\|_{{s+\sigma}}\leq1$, by using \eqref{E2.5} and Lemma \ref{lemma3.1}, it is easy to see that for all $s'\geq s>1/2$,
\begin{align}
&\| b\|_{s}\leq\|b\|_{{s+\sigma}}\leq C,\label{E5.15}\\
&\|b\|_{s'}\leq C(s')(1+\|w\|_{{s'}}).\label{E5.14}
\end{align}
If we express $b,h$ as  $\sum_{j\in\mathbb{Z}} b_{j}e^{\mathrm{i}j x},\sum_{1\leq |j|\leq N}h_je^{\mathrm{i}j x}$, respectively, it is easy to obtain that
\begin{align*}
\mathfrak{L}_1h&=\sum_{{1\leq|j|\leq N}}\left(-\omega^2\partial_{tt}(h_{j})-aj^2h_j\right)e^{\mathrm{i}j x}+\epsilon \mathrm{P}_{N}\Pi_W\left(\sum_{k\in\mathbb{Z},{1\leq|j|\leq N}}b_{k-j}h_je^{\mathrm{i}k x}\right)\nonumber\\
&=a\mathfrak{L}_{{1,\mathrm{D}}}h-a\mathfrak{L}_{{1,\mathrm{ND}}}h,
\end{align*}
where
\begin{align}
&\mathfrak{L}_{{1,\mathrm{D}}}h:=\textstyle\sum_{1\leq|j|\leq N}\left(-\frac{1}{a}\omega^2\partial_{tt}( h_{j})-j^2h_j+\epsilon\frac{b_0}{a} h_j\right)e^{\mathrm{i}j x},\nonumber\\
&\mathfrak{L}_{{1,\mathrm{ND}}}h:=-\textstyle\frac{\epsilon}{a}\sum_{1\leq|j|,|k|\leq N,j\neq k}b_{k-j}h_je^{\mathrm{i}k x},\label{E5.27}
\end{align}
with  $b_0(t)=\Pi_{V}f'(x,v(\epsilon,\omega,w(t,x))+w(t,x))$. Moreover, Lemma \ref{lemma6.1} shows that
\begin{align*}
\textstyle-\frac{1}{a}\omega^2h''_{j}-j^2h_j+\epsilon\frac{b_0}{a} h_j=&\textstyle(\omega^2\lambda^{\pm}_{0}(\epsilon,\omega,w)-j^2)\hat{h}^{\pm}_{j,0}\psi^{\pm}_0(\epsilon,\omega,w)\\
&\textstyle+\sum_{l\geq1}(\omega^2\lambda^{+}_{l}(\epsilon,\omega,w)-j^2)\hat{h}^{+}_{j,l}\psi^{+}_l(\epsilon,\omega,w)\\
&\textstyle+\sum_{l\geq1}(\omega^2\lambda^{-}_{l}(\epsilon,\omega,w)-j^2)\hat{h}^{-}_{j,l}\psi^{-}_l(\epsilon,\omega,w)
\end{align*}
for $h_{j}=\hat{h}^{\pm}_{j,0}\psi^{\pm}_0(\epsilon,\omega,w)+\sum_{l\geq1}\hat{h}^{+}_{j,l}\psi^{+}_l(\epsilon,\omega,w)+\hat{h}^{-}_{j,l}\psi^{-}_l(\epsilon,\omega,w)$, where we denote $\hat{h}_{j,0}$ by $\hat{h}^{\pm}_{j,0}$ for convenience. Then $\mathfrak{L}_{{1,\mathrm{D}}}$  is a diagonal operator on $W_{N}$. Let us define the operator
\begin{align*}
|\mathfrak{L}_{{1,\mathrm{D}}}|^{\frac{1}{2}}h=&\textstyle\sum_{1\leq|j|\leq N}|\omega^2\lambda^{\pm}_{0}(\epsilon,\omega,w)-j^2|^{\frac12} \hat{h}^{\pm}_{j,0}\psi^{\pm}_0(\epsilon,\omega,w)e^{\mathrm{i}j x} \\
&+\textstyle\sum_{1\leq|j|\leq N,l\geq1}|\omega^2\lambda^{+}_{l}(\epsilon,\omega,w)-j^2|^{\frac12} \hat{h}^{+}_{j,l}\psi^{+}_l(\epsilon,\omega,w)e^{\mathrm{i}j x} \\
&+\textstyle\sum_{1\leq|j|\leq N,l\geq1}|\omega^2\lambda^{-}_{l}(\epsilon,\omega,w)-j^2|^{\frac12}\hat{h}^{-}_{j,l}\psi^{-}_l(\epsilon,\omega,w)e^{\mathrm{i}j x},\quad h\in W_{N}.
\end{align*}
If  $\omega^2\lambda^{\pm}_{l}(\epsilon,\omega,w)-j^2\neq0$, $\forall1\leq|j|\leq N,~\forall l\geq0$, then its inverse operator is
\begin{align*}
\textstyle|\mathfrak{L}_{{1,\mathrm{D}}}|^{-\frac{1}{2}}{h}:=&\textstyle\sum_{1\leq|j|\leq N}|\omega^2\lambda^{\pm}_{0}(\epsilon,\omega,w)-j^2|^{-\frac12}\hat{h}^{\pm}_{j,0}
\psi^{\pm}_0(\epsilon,\omega,w)e^{\mathrm{i}j x}\\
&+\textstyle\sum_{1\leq|j|\leq N,l\geq1}|\omega^2\lambda^{+}_{l}(\epsilon,\omega,w)-j^2|^{-\frac12}\hat{{h}}^{+}_{j,l}
\psi^{+}_l(\epsilon,\omega,w)e^{\mathrm{i}j x}\\
&+\textstyle\sum_{1\leq|j|\leq N,l\geq1}|\omega^2\lambda^{-}_{l}(\epsilon,\omega,w)-j^2|^{-\frac12}\hat{{h}}^{-}_{j,l}
\psi^{-}_l(\epsilon,\omega,w)e^{\mathrm{i}j x},\quad h\in W_{N}.
\end{align*}
Hence the operator $\mathcal{L}_{N}(\epsilon,\omega,w)$  is reduced to
\begin{align*}
\mathcal{L}_{N}(\epsilon,\omega,w)=a|\mathfrak{L}_{1,{\mathrm{D}}}|^{\frac{1}{2}}(|\mathfrak{L}_{1,{\mathrm{D}}}|^{-\frac{1}{2}}
\mathfrak{L}_{1,{\mathrm{D}}}|\mathfrak{L}_{1,{\mathrm{D}}}|^{-\frac{1}{2}}-{R}_1-{R}_2)|\mathfrak{L}_{1,{\mathrm{D}}}|^{\frac{1}{2}},
\end{align*}
where
\begin{align}\label{E5.28}
\textstyle{R}_1=|\mathfrak{L}_{1,{\mathrm{D}}}|^{-\frac{1}{2}}
\mathfrak{L}_{1,{\mathrm{ND}}}|\mathfrak{L}_{1,{\mathrm{D}}}|^{-\frac{1}{2}},\quad
{R}_2=-|\mathfrak{L}_{1,{\mathrm{D}}}|^{-\frac{1}{2}}
\left(\frac{1}{a}\mathfrak{L}_2\right)|\mathfrak{L}_{1,{\mathrm{D}}}|^{-\frac{1}{2}}.
\end{align}
Moreover, the definitions of $\mathfrak{L}_{1,\mathrm{D}}$, $|\mathfrak{L}_{1,{\mathrm{D}}}|^{-\frac{1}{2}}$ read that for ${h}\in{W}_{N}$,
\begin{align*}
(|\mathfrak{L}_{1,{\mathrm{D}}}|^{-\frac{1}{2}}
\mathfrak{L}_{1,{\mathrm{D}}}|\mathfrak{L}_{1,{\mathrm{D}}}|^{-\frac{1}{2}}){h}=&\textstyle\sum_{1\leq|j|\leq N}\mathrm{sign}(\omega^2\lambda^{\pm}_{0}(\epsilon,\omega,w)-j^2)
\hat{h}^{\pm}_{j,0}\psi^{\pm}_{0}(\epsilon,\omega,w)e^{\mathrm{i}j x}\\
&+\textstyle\sum_{1\leq|j|\leq N,l\geq1}\mathrm{sign}(\omega^2\lambda^{+}_{l}(\epsilon,\omega,w)-j^2)
\hat{h}^{+}_{j,l}\psi^+_{l}(\epsilon,\omega,w)e^{\mathrm{i}j x}\\
&\textstyle+\sum_{1\leq|j|\leq N,l\geq1}\mathrm{sign}(\omega^2\lambda^{-}_{l}(\epsilon,\omega,w)-j^2)
\hat{h}^{-}_{j,l}\psi^-_{l}(\epsilon,\omega,w)e^{\mathrm{i}j x}.
\end{align*}
Combining this with \eqref{E5.26} implies that it is invertible with
\begin{align}
\textstyle\|(|\mathfrak{L}_{1,{\mathrm{D}}}|^{-\frac{1}{2}}
\mathfrak{L}_{1,{\mathrm{D}}}|\mathfrak{L}_{1,{\mathrm{D}}}|^{-\frac{1}{2}})^{-1}{h}\|_{s}
{\leq} \frac{L_2}{L_1}\|h\|_{s},\quad\forall s\geq0.\label{E5.30}
\end{align}
Finally, we write $\mathcal{L}_{N}(\epsilon,\omega,w)$ as
\begin{align}\label{L:equivalent}
\mathcal{L}_{N}(\epsilon,\omega,w)=a|\mathfrak{L}_{1,{\mathrm{D}}}|^{\frac{1}{2}}(|\mathfrak{L}_{1,{\mathrm{D}}}|^{-\frac{1}{2}}
\mathfrak{L}_{1,{\mathrm{D}}}|\mathfrak{L}_{1,{\mathrm{D}}}|^{-\frac{1}{2}})(\mathrm{Id}-\mathcal{R})
|\mathfrak{L}_{1,{\mathrm{D}}}|^{\frac{1}{2}},
\end{align}
where $\mathcal{R}=\mathcal{R}_1+\mathcal{R}_2$ with
\begin{align*}
\mathcal{R}_1=(|\mathfrak{L}_{1,{\mathrm{D}}}|^{-\frac{1}{2}}
\mathfrak{L}_{1,{\mathrm{D}}}|\mathfrak{L}_{1,{\mathrm{D}}}|^{-\frac{1}{2}})^{-1}R_1,\quad
\mathcal{R}_2=(|\mathfrak{L}_{1,{\mathrm{D}}}|^{-\frac{1}{2}}
\mathfrak{L}_{1,{\mathrm{D}}}|\mathfrak{L}_{1,{\mathrm{D}}}|^{-\frac{1}{2}})^{-1}R_2.
\end{align*}
To verify the invertibility of the operator $\mathrm{Id}-\mathcal{R}$, we have to impose some non-resonance conditions.

For $\tau\in(1,2),\gamma\in(0,1)$, we assume  the following ``Melnikov's'' non-resonance conditions
\begin{align}\label{E5.12}
|\omega(\lambda^{\pm}_{l}(\epsilon,\omega,w))^{\frac12}-j|>\frac{\gamma}{j^{\tau}},\quad\forall 1\leq j\leq N,~\forall l\geq 0.
\end{align}
As a result, it follows that
\begin{align}\label{E5.13}
\textstyle|\omega^2\lambda^{\pm}_{l}(\epsilon,\omega,w)-j^2|=|\omega(\lambda^{\pm}_{l}(\epsilon,\omega,w))^{\frac12}-j|
|\omega(\lambda^{\pm}_{l}(\epsilon,\omega,w))^{\frac12}+j|>\frac{\gamma}{j^{\tau-1}}.
\end{align}
Furthermore, denote
\begin{align}\label{E6.27}
\omega^{\pm}_{j}:=\min_{l\geq0}|\omega^2\lambda^{\pm}_{l}(\epsilon,\omega,w)-j^2|=|\omega^2\lambda^{\pm}_{l^{*}}(\epsilon,\omega,w)-j^2|, \quad1\leq|j|\leq N.
\end{align}
It is clear that $\omega^{\pm}_j=\omega^{\pm}_{-j}$ for all $1\leq j\leq N$.

\begin{lemm}
If  conditions \eqref{E5.12} are given, then the operator $|\mathfrak{L}_{{1,\mathrm{D}}}|^{-\frac{1}{2}}$
satisfies that for ${h}\in{W}_{N}$,
\begin{align}\label{E5.17}
&\textstyle\||\mathfrak{L}_{{1,\mathrm{D}}}|^{-\frac{1}{2}}{h}\|_{{s}}\leq\frac{\sqrt{2}L_2}{\sqrt{\gamma}L_1}\|{h}\|_{{s+\frac{\tau-1}{2}}},\quad\forall s\geq0,\\
&\textstyle\||\mathfrak{L}_{{1,\mathrm{D}}}|^{-\frac{1}{2}}{h}\|_{{s}}\leq\frac{\sqrt{2}{L_2}}{\sqrt{\gamma}L_1}N^{\frac{\tau-1}{2}}\|{h}\|_{{s}} ,\quad\forall s\geq0.\label{E5.17.2}
\end{align}
\end{lemm}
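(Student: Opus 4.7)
The plan is to diagonalize $|\mathfrak{L}_{1,\mathrm{D}}|^{-\frac{1}{2}}$ explicitly on $W_N$ by combining the Fourier expansion in $x$ with the $t$-expansion in the orthogonal basis $\{\psi^{\pm}_l(\epsilon,\omega,w)\}_{l\geq 0}$ supplied by Lemma \ref{lemma6.1}. Writing $h=\sum_{1\leq|j|\leq N,\,l\geq 0}\hat{h}^{\pm}_{j,l}\psi^{\pm}_l(\epsilon,\omega,w)e^{\mathrm{i}jx}$, the operator acts as a pointwise multiplier in the joint basis with symbol $|\omega^2\lambda^{\pm}_l-j^2|^{-1/2}$. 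Applying the lower direction of \eqref{E6.3} fibrewise in $j$ and then expanding each fibre via \eqref{E6.4} reduces the square of the left-hand side of \eqref{E5.17} to
\[\||\mathfrak{L}_{1,\mathrm{D}}|^{-\frac{1}{2}}h\|_s^2\leq\frac{1}{L_1^2}\sum_{1\leq|j|\leq N,\,l\geq 0}\frac{(\lambda^{\pm}_l+1)(\hat{h}^{\pm}_{j,l})^2(1+j^{2s})}{|\omega^2\lambda^{\pm}_l-j^2|}.\]

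The next step is to feed in the Melnikov bound \eqref{E5.13}, which converts each small divisor into the purely spatial weight $\gamma^{-1}j^{\tau-1}$. I would then invoke the elementary inequality $j^{\tau-1}(1+j^{2s})\leq 2(1+j^{2s+\tau-1})$, which holds for every $j\geq 1$ and every $s\geq 0$ because $j^{\tau-1}\leq j^{2s+\tau-1}$, in order to redistribute the gained power onto the Sobolev exponent. Reading the resulting expression backwards through the \emph{upper} direction of the norm equivalence \eqref{E5.26}, now with exponent $s+\tfrac{\tau-1}{2}$, produces
\[\||\mathfrak{L}_{1,\mathrm{D}}|^{-\frac{1}{2}}h\|_s^2\leq\frac{2L_2^2}{\gamma L_1^2}\|h\|_{s+\frac{\tau-1}{2}}^2,\]
which yields \eqref{E5.17} upon taking square roots.

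The bound \eqref{E5.17.2} follows immediately from \eqref{E5.17} by a Bernstein-type observation: since $h\in W_N$ has only modes with $|j|\leq N$, the pointwise inequality $1+j^{2s+\tau-1}\leq N^{\tau-1}(1+j^{2s})$ gives $\|h\|_{s+\frac{\tau-1}{2}}\leq N^{\frac{\tau-1}{2}}\|h\|_s$, and substitution closes the argument.

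Conceptually there is no serious obstacle; the entire proof is bookkeeping in the weighted eigenfunction basis. The two points that deserve care are (i) justifying the upgrade from the first-order Melnikov condition \eqref{E5.12} to the quadratic form \eqref{E5.13}, which uses the positivity $\lambda^{\pm}_l>0$ guaranteed by Lemma \ref{lemma6.1} to bound $|\omega\sqrt{\lambda^{\pm}_l}+j|$ from below by $j$; and (ii) placing the constants $L_1,L_2$ correctly across the two separate applications of \eqref{E6.3}, so that $L_1^{-2}$ arises from the lower inequality in the first step and $L_2^2$ from the upper inequality in the last step.
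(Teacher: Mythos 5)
Your proposal is correct and follows essentially the same route as the paper's proof: expand in the joint eigenbasis, use the norm equivalence \eqref{E5.26} (with $L_1^{-2}$ on the lower side and $L_2^2$ on the upper), convert the small divisor to $\gamma^{-1}|j|^{\tau-1}$ via \eqref{E5.13}, absorb the extra power of $|j|$ into the Sobolev index using $|j|^{\tau-1}(1+j^{2s})<2(1+|j|^{2s+\tau-1})$, and finally obtain \eqref{E5.17.2} from \eqref{E5.17} by the smoothing property on $W_N$. The only cosmetic difference is that the paper passes through the uniform divisor $\omega_j=\min_{l\ge 0}|\omega^2\lambda^{\pm}_l-j^2|$ before applying \eqref{E5.13}, whereas you bound each term directly; the estimates coincide.
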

\begin{proof}
Observe that $|j|^{\tau-1}(1+j^{2s})<2(1+|j|^{2s+\tau-1})$ for all $|j|\geq1$. Then using \eqref{E5.26} and \eqref{E5.13}--\eqref{E6.27} yields that
\begin{align*}
\||\mathfrak{L}_{{1,\mathrm{D}}}|^{-\frac{1}{2}}{h}\|^2_{{s}}
\leq&\textstyle\frac{1}{\gamma L^2_1}\sum_{
1\leq|j|\leq N}(\lambda^{\pm}_0(\epsilon,\omega,w)+\Theta)(\hat{h}^{\pm}_{j,0})^2|j|^{\tau-1}(1+j^{2s})\\
&+\textstyle\frac{1}{\gamma L^2_1}\sum_{
1\leq|j|\leq N,l\geq1}(\lambda^{+}_l(\epsilon,\omega,w)+\Theta)(\hat{h}^{+}_{{j,l}})^2|j|^{\tau-1}(1+j^{2s})\\
&+\textstyle\frac{1}{\gamma L^2_1}\sum_{
1\leq|j|\leq N,l\geq1}(\lambda^{-}_l(\epsilon,\omega,w)+\Theta)(\hat{h}^{-}_{{j,l}})^2|j|^{\tau-1}(1+j^{2s})\\
\leq &\textstyle\frac{2L^2_2}{\gamma L^2_1}\|h\|^2_{{s+\frac{\tau-1}{2}}}\leq \frac{2L^2_2N^{\tau-1}}{\gamma L^2_1}\|h\|^2_{{s}}.
\end{align*}
This ends the proof of the lemma.
\end{proof}
The next step is to estimate the upper bounds of $\left\|\mathcal{R}_ih\right\|_{s'},i=1,2$. For $\tau\in(1,2),\gamma\in(0,1)$, we further impose  ``Melnikov's'' non-resonance conditions
\begin{align}\label{E5.11}
 |\frac{\omega {l}}{c}-{j}|>\frac{\gamma}{j^\tau},\quad\forall 1\leq j\leq N,~\forall l\geq 0.
\end{align}

If conditions \eqref{E5.12} and \eqref{E5.11} are provided, then we can claim

${\boldsymbol{\mathrm{(F1)}}}$:\quad Let $\sigma$ be as seen in \eqref{E0.17}. For fixed $\tau\in(1,2),\gamma\in(0,1)$, if  $j\neq k$, then there exists some constant $\tilde{L}>0$ such that
\begin{align*}
(\omega_j\omega_k)^{\pm}\geq{\gamma^4\tilde{L}^2}|j-k|^{-2\sigma},\quad\text{with  } (\omega_j\omega_k)^{+}:=\omega^+_j\omega^+_k,(\omega_j\omega_k)^{-}:=\omega^-_j\omega^-_k.
\end{align*}
\begin{lemm}
Let \eqref{E5.12} and \eqref{E5.11} hold. For $\|w\|_{s+\sigma}\leq1$, there exists some positive constant  $L$ such that for all $s'\geq s>{1}/{2}$,
\begin{align}\label{E5.32}
\textstyle\|\mathcal{R}_1h\|_{s'}\leq\frac{\epsilon L}{2\gamma^2}\left(\|h\|_{s'}+\|w\|_{s'+\sigma}\|h\|_s\right), \quad h\in W_{N}.
\end{align}
In particular,
\begin{align}\label{E5.32-2}
\textstyle\|\mathcal{R}_1h\|_{s}\leq\frac{\epsilon L}{2\gamma^2}\|h\|_{s}, \quad h\in W_{N}.
\end{align}
\end{lemm}
\begin{proof}
From  formulae \eqref{E5.27}--\eqref{E5.28} and the definition of $|\mathfrak{L}_{{1,\mathrm{D}}}|^{-\frac{1}{2}}$, it follows that
\begin{align*}
R_1h=&-\epsilon|\mathfrak{L}_{1,{\mathrm{D}}}|^{-\frac{1}{2}}\Bigg(\sum\limits_{\stackrel{1\leq|j|,|k|\leq N}{j\neq k}}\frac{\hat{h}^{\pm}_{j,0}}{|\omega^2\lambda^{\pm}_{0}(\epsilon,\omega,w)-j^2|^{\frac12}}\frac{b_{k-j}}{a}
\psi^{\pm}_0(\epsilon,\omega,w)e^{\mathrm{i}k x}\\
&\quad \quad\quad\quad\quad\quad+\sum\limits_{\stackrel{1\leq|j|,|k|\leq N}{j\neq k,l\geq1}}\frac{\hat{h}^{+}_{j,l}}{|\omega^2\lambda^{+}_{l}(\epsilon,\omega,w)-j^2|^{\frac12}}\frac{b_{k-j}}{a}
\psi^{+}_l(\epsilon,\omega,w)e^{\mathrm{i}k x}\\
&\quad \quad\quad\quad\quad\quad+
\sum\limits_{\stackrel{1\leq|j|,|k|\leq N}{j\neq k,l\geq1}}\frac{\hat{h}^{-}_{j,l}}{|\omega^2\lambda^{-}_{l}(\epsilon,\omega,w)-j^2|^{\frac12}}\frac{b_{k-j}}{a}
\psi^{-}_l(\epsilon,\omega,w)e^{\mathrm{i}k x}\Bigg)\\
=&-\epsilon\sum\limits_{\stackrel{1\leq|j|,|k|\leq N}{j\neq k}}\frac{\hat{h}^{\pm}_{j,0}}{|\omega^2\lambda^{\pm}_{0}(\epsilon,\omega,w)-k^2|^{\frac12}|\omega^2\lambda^{\pm}_{0}(\epsilon,\omega,w)-j^2|^{\frac12}}\frac{b_{k-j}}{a}\psi^{\pm}_0(\epsilon,\omega,w)e^{\mathrm{i}kx}\\
&-\epsilon\sum\limits_{\stackrel{1\leq|j|,|k|\leq N}{j\neq k,l\geq1}}\frac{\hat{h}^{+}_{j,l}}{|\omega^2\lambda^{+}_{l}(\epsilon,\omega,w)-k^2|^{\frac12}|\omega^2\lambda^{+}_{l}(\epsilon,\omega,w)-j^2|^{\frac12}}\frac{b_{k-j}}{a}\psi^{+}_l(\epsilon,\omega,w)e^{\mathrm{i}kx}\\
&-\epsilon\sum\limits_{\stackrel{1\leq|j|,|k|\leq N}{j\neq k,l\geq1}}\frac{\hat{h}^{-}_{j,l}}{|\omega^2\lambda^{-}_{l}(\epsilon,\omega,w)-k^2|^{\frac12}|\omega^2\lambda^{-}_{l}(\epsilon,\omega,w)-j^2|^{\frac12}}\frac{b_{k-j}}{a}\psi^{-}_l(\epsilon,\omega,w)e^{\mathrm{i}kx},
\end{align*}
Then
\begin{align*}
(R_1h)_{k}=&-\epsilon\sum\limits_{\stackrel{1\leq|j|\leq N}{j\neq k}}\frac{\hat{h}^{\pm}_{j,0}}{|\omega^2\lambda^{\pm}_{0}(\epsilon,\omega,w)-k^2|^{\frac12}|\omega^2\lambda^{\pm}_{0}(\epsilon,\omega,w)-j^2|^{\frac12}}\frac{b_{k-j}}{a}\psi^{\pm}_0(\epsilon,\omega,w)\\
&-\epsilon\sum\limits_{\stackrel{1\leq|j|\leq N}{j\neq k,l\geq 1}}\frac{\hat{h}^{+}_{j,l}}{|\omega^2\lambda^{+}_{l}(\epsilon,\omega,w)-k^2|^{\frac12}|\omega^2\lambda^{+}_{l}(\epsilon,\omega,w)-j^2|^{\frac12}}\frac{b_{k-j}}{a}\psi^{+}_l(\epsilon,\omega,w)\\
&-\epsilon\sum\limits_{\stackrel{1\leq|j|\leq N}{j\neq k,l\geq 1}}\frac{\hat{h}^{-}_{j,l}}{|\omega^2\lambda^{-}_{l}(\epsilon,\omega,w)-k^2|^{\frac12}|\omega^2\lambda^{-}_{l}(\epsilon,\omega,w)-j^2|^{\frac12}}\frac{b_{k-j}}{a}\psi^{-}_l(\epsilon,\omega,w).
\end{align*}
Hence combining $\mathrm{(\bf F1)}$ with \eqref{E6.3}, \eqref{scalar}, \eqref{E6.27} yields that
\begin{align*}
\textstyle\|(R_1h)_k\|_{H^1}
\leq\frac{\epsilon L_2}{\gamma^2 \tilde{L}L_1}\sum_{1\leq|j|\leq N,j\neq k}\|\frac{b_{k-j}}{a}\|_{H^1}|k-j|^\sigma\|h_j\|_{H^1}.
\end{align*}
We further define
\begin{align*}
&\textstyle\Lambda(x):=\sum_{1\leq |j|,|k|\leq N}\|\frac{b_{k-j}}{a}\|_{H^1}|k-j|^\sigma\|h_j\|_{H^1}e^{\mathrm{i}kx},\\ &\textstyle\mathrm{{p}}(x):=\sum_{j\in\mathbb{Z}}\|\frac{b_{j}}{a}\|_{H^1}|j|^{\sigma} e^{\mathrm{i}jx},\quad
\mathrm{{q}}(x):=\sum_{1\leq|j|\leq N}\|h_j\|_{H^1}e^{\mathrm{i}jx}.
\end{align*}
It is straightforward that $\Lambda=\mathrm{P}_{N}(\mathrm{pq})$. Moreover, due to \eqref{E5.14}, we have that for all $s'\geq s>{1}/{2}$,
\begin{align*}
\|\mathrm{p}\|_{s'}\leq C'(1+\|w\|_{s'+\sigma}),\quad
\|\mathrm{q}\|_{s'}=\|h\|_{s'}.
\end{align*}
Hence, if $\|w\|_{s+\sigma}\leq1$, then using \eqref{E2.1} shows that
\begin{align*}
\textstyle\|R_1h\|_{s'} \leq\frac{\epsilon C'}{2\gamma^2}(\|w\|_{s'+\sigma}\|h\|_s+\|h\|_{s'}).
\end{align*}
As a consequence, formula \eqref{E5.32} follows from the above inequality together with \eqref{E5.30}. Moreover we can easily derive  \eqref{E5.32-2}.
\end{proof}

\begin{lemm}
For $\|w\|_{s+\sigma}\leq1$, if conditions \eqref{E5.12} are given, then for all $s'\geq s>{1}/{2}$,
\begin{align}\label{E5.31}
\textstyle\|\mathcal{R}_2h\|_{s'}\leq\frac{\epsilon L}{2\gamma}\left(\|h\|_{s'}+\|w\|_{s'+\sigma}\|h\|_s\right),\quad h\in W_{N}.
\end{align}
In particular,
\begin{align}\label{E5.31-2}
\textstyle\|\mathcal{R}_2h\|_{s}\leq\frac{\epsilon L}{2\gamma}\|h\|_{s},\quad h\in W_{N}.
\end{align}
\end{lemm}
\begin{proof}
In view of Lemma \ref{lemma3.1}, one has
\begin{align*}
\mathrm{D}_{w}v(\epsilon,\omega, w)[|\mathfrak{L}_{1,{\mathrm{D}}}|^{-\frac{1}{2}}h]\in {H}^1(\mathbb{T}).
\end{align*}
Moreover, it can be obtained from the fact  $1<\tau<2$ that $\sigma=\frac{\tau(\tau-1)}{(2-\tau)}>\tau-1$. Then  combining \eqref{E5.28}, \eqref{E5.17} with \eqref{E5.15}--\eqref{E5.14} gives that
\begin{align*}
\|R_2h\|_{s'}\leq&\textstyle\frac{\epsilon\sqrt{2}L_2}{\sqrt{\gamma}L_1}\|\frac{1} {a}\|_{H^1}\|b\mathrm{D}_wv(\epsilon,\omega,w)[|\mathfrak{L}_{1,{\mathrm{D}}}|^{-\frac{1}{2}}h]\|_{s'+\frac{\tau-1}{2}}\\
\stackrel{\eqref{E2.1}}
{\leq}&\textstyle\frac{\epsilon \sqrt{2} L_2}{\sqrt{\gamma}L_1}\|\frac {1 } {a}\|_{H^1}C(\|b\|_{s'+\sigma}\||\mathfrak{L}_{1,{\mathrm{D}}}|^{-\frac{1}{2}}h\|_{s-\frac{\tau-1}{2}}
+\|b\|_{s+\sigma}\||\mathfrak{L}_{1,{\mathrm{D}}}|^{-\frac{1}{2}}h\|_{s-\frac{\tau-1}{2}})\\
{\leq}&\textstyle\frac{\epsilon C'}{2{\gamma}}(\|w\|_{s'+\sigma}\|h\|_s+\|h\|_{s'}).
\end{align*}
Hence we can get \eqref{E5.31} from the above estimate together with \eqref{E5.30}. Furthermore, it is clear that \eqref{E5.31-2} follows.
\end{proof}

\begin{lemm}\label{lemma3}
Let \eqref{E5.12} and \eqref{E5.11} hold.  If $\|w\|_{s+\sigma}\leq1$, for $\epsilon L\gamma^{-2}< \delta_0$ small enough, then the operator $\mathrm{Id}-\mathcal{R}$ is invertible and its inverse operator satisfies that for all $s'\geq s>{1}/{2}$,
\begin{align}\label{E5.21}
\|(\mathrm{Id}-\mathcal{R})^{-1}h\|_{s'}\leq 2(\|h\|_{s'}+\|w\|_{s'+\sigma}\|h\|_s), \quad h\in W_{N}.
\end{align}
\end{lemm}
\begin{proof}
Using \eqref{E5.32-2} and \eqref{E5.31-2} yields that for $\epsilon L\gamma^{-2}< \delta_0$ small enough,
\begin{align}\label{E5.20}
\textstyle{\|\mathcal{R}h\|_{s}\leq\epsilon L\gamma^{-2}\|h\|_{s}\leq\frac{1}{2}\|h\|_s}.
\end{align}
Then the operator $(\mathrm{Id}-\mathcal{R})$ is invertible from the Neumann series.

Next let us claim that if $\|w\|_{s+\sigma}\leq1$, then for all $\mathrm{p}\in\mathbb{N}^{+}$,
\begin{align}\label{E5.19}
\|\mathcal{R}^\mathrm{p}h\|_{s'}\leq(\epsilon L\gamma^{-2})^\mathrm{p}(\|h\|_{s'}+\mathrm{p}\|w\|_{s'+\sigma}\|h\|_s),\quad h\in W_{N}.
\end{align}
Hence, for $\epsilon L\gamma^{-2}< \delta_0$ small enough, the above inequality  implies that for all $s'\geq s>{1}/{2}$,
\begin{align*}
\|(\mathrm{Id}-\mathcal{R})^{-1}h\|_{s'}=&\textstyle\|(\mathrm{Id}+\sum_{\mathrm{p}\in\mathbb{N}^{+}}\mathcal{R}^\mathrm{p})h\|_{s'}\leq\|h\|_{s'}
+\sum_{\mathrm{p}\in\mathbb{N}^{+}}\|\mathcal{R}^\mathrm{p}h\|_{s'}\\
\leq&\textstyle\|h\|_{s'}+\sum_{\mathrm{p}\in\mathbb{N}^{+}}(\epsilon L\gamma^{-2})^\mathrm{p}(\|h\|_{s'}+\mathrm{p}\|w\|_{s'+\sigma}\|h\|_s)\\
\leq&\textstyle2\|h\|_{s'}+2\|w\|_{s'+\sigma}\|h\|_s.
\end{align*}

It remains to check \eqref{E5.19} by induction. For $\mathrm{p}=1$, formulae \eqref{E5.32} and \eqref{E5.31}  establish that
\begin{align}\label{E5.33}
\left\|\mathcal{R}h\right\|_{s'}\leq\epsilon L\gamma^{-2}\left(\|h\|_{s'}+\|w\|_{s'+\sigma}\|h\|_s\right).
\end{align}
This shows that \eqref{E5.19} holds for $\mathrm{p}=1$.
Suppose that \eqref{E5.19} could hold for $\mathrm{p}=\ell$, with $\ell\in\mathbb{N}$ and $\ell\geq2$. Let us show that \eqref{E5.19} follows for $\mathrm{p}=\ell+1$. Based on the assumption for $\mathrm{p}=\ell$, \eqref{E5.20} and \eqref{E5.33}, we can conclude
\begin{align*}
\|\mathcal{R}^{\ell+1}h\|_{s'}=&\|\mathcal{R}^\ell(\mathcal{R}h)\|_{s'}{\leq}
(\epsilon L\gamma^{-2})^\ell(\|\mathcal{R}h\|_{s'}+\ell\|w\|_{s'+\sigma}\|\mathcal{R}h\|_s)\\
\leq&(\epsilon L \gamma^{-2} )^l\left(\epsilon L \gamma^{-2}\|h\|_{s'}+(\ell\epsilon L\gamma^{-2} +\epsilon L\gamma^{-2} )\|w\|_{s'+\sigma}\|h\|_s\right)\\
\leq&(\epsilon L\gamma^{-2} )^{\ell+1}(\|h\|_{s'}+(\ell+1)\|w\|_{s'+\sigma}\|h\|_s).
\end{align*}
Thus we arrive at the conclusion of the lemma.
\end{proof}

As a consequence, Lemma \ref{lem:invertibility} follows from \eqref{E5.30}--\eqref{L:equivalent},  \eqref{E5.17.2} and \eqref{E5.21}. Namely we complete the proof of the invertibility  of the ``truncated-restricted'' of the linearized operator.

Finally, we need to check the fact $\mathrm{(\bf F1)}$. For this, we need to introduce more properties on the eigenvalues of the Hill problem \eqref{E6.1}. By Taylor expansion and \eqref{E6.24}, there is an integer $J_0>0$ large enough, $\Theta>0$ large enough such that
\begin{align}\label{E6.28}
\textstyle|(\lambda^{\pm}_l(\epsilon,\omega,w))^{\frac12}-\frac{l}{c}|\leq {\Theta}/{l},\quad \forall l>J_0.
\end{align}
Let $l^*$ be define by \eqref{E6.27}. If $j^2-\omega^2\lambda^{+}_{J_0+1}(\epsilon,\omega,w)>0$, then $l^*\geq J_0+1$.  Therefore there exists  an integer ${J}_1:={J}_1({J}_0)>0$ such that for every $j>\omega{J}_1$ (satisfying $l^*\geq J_0+1$),
\begin{align}\label{E6.25}
\textstyle l^*\geq {\Theta_0j}/{\omega},\quad \Theta_0>0.
\end{align}

Now let us complete the proof of the fact $\mathrm{(\bf F1)}$.

\begin{proof}[\underline{Proof of $\mathrm{(\bf F1)}$}]
The fact $\varsigma=\frac{2-\tau}{\tau}\in(0,1)$ follows from $1<\tau<2$. Denote
\begin{align*}
\omega^{\pm}_j:=|\omega^2\lambda^{\pm}_{l^{*}}(\epsilon,\omega,w)-j^2|,\quad\omega^{\pm}_k:=|\omega^2\lambda^{\pm}_{i^{*}}(\epsilon,\omega,w)-k^2|,\quad l^{*}\geq0,i^{*}\geq0,
\end{align*}
where $j\geq1,k\geq 1$, with $j\neq k$. Let us consider the following two cases.

{\bf Case 1}: $2|k-j|>(\max{\{k,j\}})^{\varsigma}$. Due to \eqref{E5.13}, we obtain
\begin{align*}
(\omega_j\omega_k)^{\pm}\geq\frac{\gamma^2}{(jk)^{\tau-1}}\geq\frac{\gamma^2}{(\max{\{k,j\}})^{2(\tau-1)}}
\geq\frac{\gamma^2}{2^{2(\tau-1)/\varsigma}|k-j|^{2(\tau-1)/\varsigma}}.
\end{align*}

{\bf Case 2}: $0<2|k-j|\leq(\max{\{k,j\}})^{\varsigma}$. Since $\varsigma\in(0,1)$, if $k>j$, then
\begin{align*}
2(k-j)\leq k^{\varsigma}\Longrightarrow 2j\geq2k-k^{\varsigma}> k.
\end{align*}
In the same way, if $j>k$, then $2k> j$. Therefore, $\frac{k}{2}< j<2k$.

$\mathrm{(i)}$ If $\lambda^{\pm}_{l^{*}}(\epsilon,\omega,w)<0$ and $\lambda^{\pm}_{i^{*}}(\epsilon,\omega,w)<0$, then $\omega^{\pm}_j\geq j^2,\omega^{\pm}_k\geq k^2$. Hence we carry out
\begin{align*}
(\omega_j\omega_k)^{\pm}\geq 1\geq\gamma^4.
\end{align*}

$\mathrm{(ii)}$ If either $\lambda^{\pm}_{l^{*}}(\epsilon,\omega,w)<0$ or $\lambda^{\pm}_{i^{*}}(\epsilon,\omega,w)<0$, then in the first case
\begin{align*}
(\omega_j\omega_k)^{\pm}\stackrel{\eqref{E5.13}}{\geq}j^2\frac{\gamma}{k^{\tau-1}}\geq2^{1-\tau}\gamma
\end{align*}
and in the latter
\begin{align*}
(\omega_j\omega_k)^{\pm}\stackrel{\eqref{E5.13}}{\geq}\frac{\gamma}{j^{\tau-1}}k^2\geq2^{1-\tau}\gamma.
\end{align*}

$\mathrm{(iii)}$ Let us consider the case $\lambda^{\pm}_{l^{*}}(\epsilon,\omega,w)>0$ and $\lambda^{\pm}_{i^{*}}(\epsilon,\omega,w)>0$. Moreover, we define
\begin{align*}
\textstyle j_{\star}:=\max\left\{2\omega{J}_1,\left(\frac{27\Theta}{2\gamma\Theta_0}\right)^{\frac{1}{1-\varsigma\tau}}\right\}.
\end{align*}

On the one hand, we assume that $\max{\{j,k\}}={j}> j_\star$. The definition of $j_{\star}$ implies that $l^*\geq J_0+1,i^*\geq J_0+1$. Moreover, it follows from the facts $\varsigma=\frac{2-\tau}{\tau},\tau\in(1,2)$ that $\varsigma\tau<1$. Using \eqref{E5.11}, \eqref{E6.28}--\eqref{E6.25} and the facts $\tau\in(1,2),\omega\in(\frac12,\frac32)$ yields that
\begin{align*}
|(\omega(\lambda^{\pm}_{l^{*}}(\epsilon,\omega,w))^{\frac12}-j)
-(\omega(\lambda^{\pm}_{i^{*}}(\epsilon,\omega,w))^{\frac12}-k)|\geq&\frac{\gamma}{(j-k)^\tau}-\omega\frac{\Theta}{ l^*}-\omega\frac{\Theta}{ i^*}\\
\geq &\frac{2^\tau \gamma}{j^{\varsigma\tau}}-\frac{27\Theta}{4\Theta_0j}
\geq\frac{\gamma}{2j^{\varsigma\tau}}+\frac{\gamma}{j^{\varsigma\tau}}.
\end{align*}
This arrives at
\begin{align*}
|\omega(\lambda^{\pm}_{l^{*}}(\epsilon,\omega,w))^{\frac12}-j|+|\omega(\lambda^{\pm}_{i^{*}}(\epsilon,\omega,w))^{\frac12}-k|
\geq\frac{1}{2}\left(\frac{\gamma}{j^{\varsigma\tau}}+\frac{\gamma}{k^{\varsigma\tau}}\right).
\end{align*}
Then
\begin{align*}
\mathrm{either}\quad|\omega(\lambda^{\pm}_{l^{*}}(\epsilon,\omega,w))^{\frac12}-j|\geq\frac{\gamma}{2j^{\varsigma\tau}}\quad\mathrm{or}
\quad|\omega(\lambda^{\pm}_{i^{*}}(\epsilon,\omega,w))^{\frac12}-k|\geq\frac{\gamma}{2k^{\varsigma\tau}}
\end{align*}
holds. Without loss of generality, suppose that $|\omega(\lambda^{\pm}_{l^{*}}(\epsilon,\omega,w))^{\frac12}-j|\geq\frac{\gamma}{2j^{\varsigma\tau}}$. Hence we get
\begin{align*}
\omega^{\pm}_j=|\omega^2\lambda^{\pm}_{l^{*}}(\epsilon,\omega,w)-j^2|=|(\omega (\lambda^{\pm}_{l^{*}}(\epsilon,\omega,w))^{\frac12}+j)(\omega (\lambda^{\pm}_{l^{*}}(\epsilon,\omega,w))^{\frac12}-j)|\geq \frac{\gamma}{2}j^{1-\varsigma\tau}.
\end{align*}
Thus it follows that
\begin{align*}
(\omega_j\omega_k)^{\pm}\stackrel{\eqref{E5.13}}{\geq} \frac{\gamma}{2}j^{1-\varsigma\tau} \frac{\gamma}{k^{\tau-1}}
\stackrel{k<2j}{\geq}\frac{\gamma^2}{2^{\tau}}j^{2-\tau-\varsigma\tau}=\frac{\gamma^2}{2^\tau},
\end{align*}
where $\varsigma$ is taken as $(2-\tau)/\tau$ to ensure $2-\tau-\varsigma\tau=0$. The same conclusion is reached if $\max{\{j,k\}}=k>j_\star$.

And on the other hand, we assume that $\max{\{j,k\}}\leq j_\star$. If $j_\star=2\omega J_1$, then for all $\omega\in(\frac{1}{2},\frac{3}{2})$,
\begin{align*}
(\omega_j\omega_k)^{\pm}\stackrel{\eqref{E5.13}}{\geq}\frac{\gamma^2}{(jk)^{\tau-1}}\geq\frac{\gamma^2}{(j_\star)^{2(\tau-1)}}
=\frac{\gamma^2}{(2\omega J_1)^{2(\tau-1)}}\geq\frac{\gamma^2}{(3J_1)^{2(\tau-1)}}.
\end{align*}
If $ j_\star=\left(\frac{27\Theta}{2\gamma\Theta_0}\right)^{\frac{1}{1-\varsigma\tau}}$, then for all $\omega\in(\frac{1}{2},\frac{3}{2})$,
\begin{align*}
(\omega_j\omega_k)^{\pm}\stackrel{\eqref{E5.13}}{\geq}\frac{\gamma^2}{(jk)^{\tau-1}}\geq\frac{\gamma^2}{(j_\star)^{2(\tau-1)}}
=\frac{\gamma^4\Theta^2_0}{\left({27\Theta}/2\right)^2}.
\end{align*}

Since $\omega^{\pm}_{j}=\omega^{\pm}_{-j},\omega^{\pm}_{k}=\omega^{\pm}_{-k}$, the remainder of the lemma may be proved in the similar way as shown above when $j\geq1,k\leq-1$, or $j\leq-1,k\geq1$, or $j,k\leq-1$.

As a result, we arrive at  the conclusion in $\mathrm{(\bf F1)}$.
\end{proof}

\section{Solutions of the range equation}\label{sec:range}

The purpose of the present section is  to look for solutions of  the rang equation \eqref{E:rang22} in the space $W\cap H^s$.  We first introduce the smoothness of the composition operator $\mathcal{F}_1$ with $\mathcal{F}_1(\epsilon,\omega,w):=\mathcal{F}(\epsilon,\omega,w)+ g$. Let $\ell\geq s'+3$ with $s'\geq s\geq 1/2$. If $f \in\mathcal{C}_\ell$ and $x\longmapsto g(\cdot,x)$ is in $C^\ell(\mathbb{T};H^1(\mathbb{T}))$, then  the operator $\mathcal{F}_1$  has the following standard properties.

$(\boldsymbol {\mathrm{P1}})$(Regularity.) $\mathcal{F}_1\in C^2(W\cap H^s;H^s)$ and $\mathcal{F}_1,\mathrm{D}_{w}\mathcal{F}_1,\mathrm{D}^2_{w}\mathcal{F}_1$ are bounded on $\{\|w\|_{s}\leq 1\}$. Note that  $\mathrm{D}_{w}\mathcal{F}_1=\mathrm{D}_{w}\mathcal{F},\mathrm{D}^2_{w}\mathcal{F}_1=\mathrm{D}^2_{w}\mathcal{F}$.

$(\boldsymbol {\mathrm{P2}})$(Tame.) $\forall s\leq s'\leq k-3$, $\forall w\in W\cap H^{s'}$ with $\|w\|_{s}\leq1$, one has
\begin{align*}
&\|\mathcal{F}_1(\epsilon,\omega,w)\|_{{s'}}\leq C(s')(1+\|w\|_{{s'}}),~~\|\mathrm{D}_{w}\mathcal{F}_1(\epsilon,\omega,w)[h]\|_{{s'}}{\leq}C(s')(\|w\|_{{s'}}\|h\|_{s}+\|h\|_{{s'}}),\\
&\|\mathrm{D}^2_{w}\mathcal{F}_1(\epsilon,\omega,w)[h,\mathrm{h}]\|_{s'}{\leq} C(s')(\|w\|_{s'}\|h\|_{s}\|\mathrm{h}\|_{s}+\|h\|_{s'}\|\mathrm{h}\|_{s}+\|h\|_{s}\|\mathrm{h}\|_{s'}).
\end{align*}

$(\boldsymbol {\mathrm{P3}})$(Taylor tame.) $\forall s\leq s'\leq k-3$, $\forall w\in W\cap H^{s'}$, $\forall h\in W\cap H^{s'}$ and $\|w\|_{s}\leq1$, $\|h\|_{s}\leq1$, one has
\begin{align*}
&\|\mathcal{F}_1(\epsilon,\omega,w+h)-\mathcal{F}_1(\epsilon,\omega,w)-\mathrm{D}_{w}\mathcal{F}_1(\epsilon,\omega,w)[h]\|_{s}\leq C\|h\|^2_{s},\\
&\|\mathcal{F}_1(\epsilon,\omega,w+h)-\mathcal{F}_1(\epsilon,\omega,w)-\mathrm{D}_{w}\mathcal{F}_1(\epsilon,\omega,w)[h]\|_{{s'}}\leq C(s')(\|w\|_{{s'}}\|h\|^2_{s}+\|h\|_{s}\|h\|_{{s'}}).
\end{align*}
Note that $(\mathrm{\bf P1})$--$(\mathrm{\bf P3})$ follow from Lemmata \ref{lemma2.3}--\ref{lemma2.4} and Lemma \ref{lemma3.1}.

Denote by the symbol $\lfloor\cdot\rfloor$ the integer part. For fixed $\chi>1$, let
\begin{align}\label{E4.5}
N_n:=\lfloor e^{\mathfrak{q}\chi^{n}}\rfloor,\quad\text{with }\mathfrak{q}=\ln N_0.
\end{align}
Moreover we introduce an open set
\begin{align*}
\textstyle A_{0}:=\left\{(\epsilon,\omega)\in(\epsilon_1,\epsilon_2)\times(\frac{1}{2},\frac{3}{2}):|\omega (\lambda^{\pm}_{l})^{\frac12}-j|>\frac{\gamma}{j^{\tau}},\forall j=1,\cdots,N_0, \forall l\geq0\right\},
\end{align*}
where $\lambda^{\pm}_l,l\geq0$ are the eigenvalues of the Hill problem
\begin{align}\label{E3.3}
\begin{cases}
-y''=\lambda ay,\\
y(t)=y(t+2\pi),\quad y'(t)=y'(t+2\pi).
\end{cases}
\end{align}
Notice that we denote $\lambda_0$ by $\lambda^{\pm}_0$. The following lemma addresses an inductive scheme.

\begin{lemm}(Inductive scheme)\label{lemma4.1}
For all $n\in\mathbb{N}$, there exists a sequence of subsets $(\epsilon,\omega)\in A_{n}\subseteq A_{n-1}\subseteq\cdots\subseteq A_1\subseteq A_{0}$, where
\begin{align*}
A_{n}:=\left\{(\epsilon,\omega)\in A_{n-1}:(\epsilon,\omega)\in\Delta^{\gamma,\tau}_{N_{n}}(w_{n-1})\right\},
\end{align*}
and a sequence $w_{n}(\epsilon,\omega)\in W_{N_{n}}$ with
\begin{align}\label{E4.10}
\|w_{n}\|_{s+\sigma}\leq1,
\end{align}
and
\begin{align}
&\textstyle\|w_{0}\|_{s}\leq \frac{K_1\epsilon}{\gamma}N^{\tau-1}_0,\quad
\|w_k-w_{k-1}\|_{s}\leq \frac{ K_2\epsilon}{\gamma}N^{-\sigma-1}_{k}, \quad\forall1\leq k\leq n,\label{E4.11}\\
&\textstyle\|\partial_{tt}w_0\|_s\leq\frac{K'_1\epsilon}{\gamma\omega^2}N^{\tau+1}_0, \quad \|\partial_{tt}(w_k-w_{k-1})\|_{s}\leq\frac{K'_2\epsilon}{\gamma\omega^2}N^{-1}_{k}, \quad\forall1\leq k\leq n,\label{E3.70}
\end{align}
where $K_i,K'_i>0,i=1,2$ (depending on $a, f, \epsilon_0, \hat{v}, \gamma,\tau, s,\beta$ at most) such that for $(\epsilon,\omega)\in A_{n}$, if  $\epsilon\gamma^{-2}< \delta_4$ is small enough, with ${\epsilon \gamma^{-1}K_1 N^{\tau}_0}<r$ and $\epsilon\gamma^{-1}K_2<r$, then $w_{n}(\epsilon,\omega)$ is a solution of equation
\begin{align}\label{E0.19}
  L_{\omega}w-\epsilon \mathrm{P}_{N_n}\Pi_{W}\mathcal{F}_1(\epsilon,\omega,w)=0. \tag{$P_{N_{n}}$}
\end{align}
\end{lemm}

\begin{proof}
The proof of the lemma is given by induction.

$\underline{Initialization}.$ Clearly, it can be seen that  the spectrum of  $\frac{1}{a}L_{\omega}$ on $W_{N_0}$ is
\begin{align*}
-\omega^2\lambda^{\pm}_{l}+j^2,\quad \forall1\leq|j|\leq N_0,\forall l\geq0,
\end{align*}
where $\lambda^{\pm}_{l}$ are the eigenvalues of \eqref{E3.3}. Then the definition of $A_0$ shows that solving equation $(P_{N_0})$ is reduced to look for  the fixed point of $w=U_0(w)$, where
\begin{align*}
\textstyle U_0:W_{N_0}\longrightarrow W_{N_0},\quad w\longmapsto \epsilon (\frac{1}{a}L_{\omega})^{-1}\frac{1}{a} \mathrm{P}_{N_0}\Pi_{W}\mathcal{F}_1(\epsilon,\omega,w).
\end{align*}
Let us check that $U_0$ is a contraction mapping.
\begin{lemm}\label{lemma1}
Let $(\epsilon,\omega)\in A_{0}$. If $\epsilon\gamma^{-2}< \delta_1N^{1-\tau}_0\leq\frac{\delta_0}{L}$ is small enough, then the mapping $U_0$ is a contraction in
\begin{align*}
\mathcal{B}(0,\rho_0):=\left\{w\in W_{N_0}:\|w\|_{s}\leq \rho_0\right\},\quad\rho_0:={\epsilon \gamma^{-1}K_1 N^{\tau-1}_0},
\end{align*}
with ${\epsilon \gamma^{-1}K_1 N^{\tau}_0}<r$.
\end{lemm}
Remark that the fact $\delta_1N^{1-\tau}_0\leq\frac{\delta_0}{L}$ is to satisfy the assumption in Lemma \ref{lemma3}.
\begin{proof}
By the definition of $A_0$ and $\mathrm{(\bf P1)}$, for $\epsilon\gamma^{-1}N^{\tau-1}_0\leq\epsilon\gamma^{-2}N^{\tau-1}_0<\delta_1$ small enough, one has
\begin{align}
&\textstyle\|U_0(w)\|_{s}{\leq}{\epsilon\gamma^{-1}\tilde{K} N^{\tau-1}_0}\|\frac{1}{a}\|_{H^1}\left\|\mathrm{P}_{N_0}\Pi_{W}\mathcal{F}_1(\epsilon,\omega,w)\right\|_s
\leq{\epsilon \gamma^{-1}K_1 N^{\tau-1}_0},\nonumber\\
&\textstyle\|\mathrm{D}U_0(w)\|_s{\leq}{\epsilon\gamma^{-1}\tilde{K} N^{\tau-1}_0}\|\frac{1}{a}\|_{H^1}\left\|\mathrm{P}_{N_0}\Pi_{W}\mathrm{D}_w\mathcal{F}_1(\epsilon,\omega,w)\right\|_s\leq{1}/{2}.\label{E3.24}
\end{align}
Thus the mapping $U_0$ is a contraction in $\mathcal{B}(0,\rho_0)$.
\end{proof}

Let us continue to the proof of initialization. Denote by $w_0$ the unique solution of equation $(P_{N_0})$ in $\mathcal{B}(0,\rho_0)$. Observe that
\begin{align*}
\omega^2\partial_{tt}w_0=a\partial_{xx}w_0+\epsilon \mathrm{P}_{N_0}\Pi_{W}\mathcal{F}_1(\epsilon,\omega,w_0).
\end{align*}
By virtue of \eqref{E4.52}, Lemma \ref{lemma1} and $\gamma\in (0,1)$, we conclude $\|\partial_{tt}w_0\|_s\leq\frac{K'_1\epsilon}{\gamma\omega^2}N^{\tau+1}_0.$
Moreover,  one has that for $\epsilon\gamma^{-1}N^{\tau-1}_0<\delta_1$  small enough,
\begin{align}\label{E3.12}
\textstyle\|w_0\|_{s+\beta}=\|\epsilon(\frac{1}{a}L_{\omega})^{-1}\frac{1}{a}\mathrm{P}_{N_0}\Pi_{W}\mathcal{F}_1(\epsilon,\omega,w_0)\|_{s+\beta}\leq \bar{K}.
\end{align}

$\underline{Iteration}.$ Suppose that we could obtain a solution $w_{n}\in W_{N_{n}}$ of \eqref{E0.19} satisfying estimates \eqref{E4.10}--\eqref{E3.70}.
Our purpose is to find a solution $w_{n+1}\in W_{N_{n+1}}$ of equation
\begin{align}\label{E0.18}
L_{\omega}w-\epsilon \mathrm{P}_{N_{n+1}}\Pi_{W}\mathcal{F}_1(\epsilon,\omega,w)=0 \tag{$P_{N_{n+1}}$}
\end{align}
with estimates \eqref{E4.10}--\eqref{E3.70} at ($n+1$)--th step. Let
\begin{align*}
w_{n+1}=w_{n}+h\quad\text{ with }h\in W_{N_{n+1}}
\end{align*}
be a solution of equation \eqref{E0.18}. It follows from the equality
$L_{\omega}w_{n}-\epsilon \mathrm{P}_{N_{n}}\Pi_{W}\mathcal{F}_1(\epsilon,\omega,w_{n})=0$ that
\begin{align*}
L_{\omega}(w_{n}+h)-\epsilon \mathrm{P}_{N_{n+1}}\Pi_{W}\mathcal{F}_1(\epsilon,\omega,w_{n}+h)=&L_{\omega}h+ L_{\omega}w_{n}-\epsilon \mathrm{P}_{N_{n+1}}\Pi_{W}\mathcal{F}_1(\epsilon,\omega,w_{n}+h)\\
=&-\mathcal{L}_{N_{n+1}}(\epsilon,\omega,w_n)h+R_{n}(h)+r_{n},
\end{align*}
where $\mathcal{L}_{N_{n+1}}(\epsilon,\omega,w_n)$ is defined by \eqref{E4.32} and
\begin{align*}
&R_{n}(h):=-\epsilon \mathrm{P}_{N_{n+1}}(\Pi_{W}\mathcal{F}_1(\epsilon,\omega,w_n+h)-\Pi_{W}\mathcal{F}_1(\epsilon,\omega,w_n)-\Pi_{W}\mathrm{D}_{w}\mathcal{F}_1(\epsilon,\omega,w_n)[h]),\\
&r_{n}:=\epsilon(\mathrm{P}_{N_{n}}\Pi_{W}\mathcal{F}_1(\epsilon,\omega,w_n)-\mathrm{P}_{N_{n+1}}\Pi_{W}\mathcal{F}_1(\epsilon,\omega,w_n))=
-\epsilon \mathrm{P}^{\bot}_{N_{n}}\mathrm{P}_{N_{n+1}}\Pi_{W}\mathcal{F}_1(\epsilon,\omega,w_n).
\end{align*}
Since $(\epsilon,\omega)\in A_{n+1}\subseteq A_{n}$, by using \eqref{E4.10} and Lemma \ref{lem:invertibility}, the  operator $\mathcal{L}_{N_{n+1}}(\epsilon,\omega,w_n)$ is invertible with, for all $s'\geq s>1/2$,
\begin{align}
&\textstyle\|\mathcal{L}^{-1}_{N_{n+1}}(\epsilon,\omega,w_{n})h\|_{s}\leq K\gamma^{-1}N_{n+1}^{\tau-1}\|h\|_{s},\label{E4.13}\\
&\textstyle\|\mathcal{L}^{-1}_{N_{n+1}}(\epsilon,\omega,w_{n})h\|_{s'}\leq K(s')\gamma^{-1}N_{n+1}^{\tau-1}(\|h\|_{s'}+\|w\|_{s'+\sigma}\|h\|_{s}).\label{E4.12}
\end{align}
Then we reformulate solving equation \eqref{E0.18} as  the fixed point problem of  $h=U_{n+1}(h)$, where
\begin{align*}
U_{n+1}: W_{N_{n+1}} \longrightarrow W_{N_{n+1}},\quad h\longmapsto \mathcal{L}^{-1}_{N_{n+1}}(\epsilon,\omega,w_n)(R_{n}(h)+r_{n}).
\end{align*}

Moreover we denote $S_{n}:=1+\|w_n\|_{s+\beta}.$ If $\epsilon\gamma^{-1}\leq\epsilon\gamma^{-2}<\delta_2$ is small enough, then we can claim that for some positive constant $\tilde{C}:=\tilde{C}(\chi,\mathfrak{q},\tau,\sigma)$,
\begin{align*}
\textstyle \boldsymbol{\mathrm{(F2)}}:\quad S_{n}\leq \tilde{C}(1+\bar{K})N^{\frac{1}{\chi-1}{(\tau-1+\sigma)}}_{n+1}.
\end{align*}
The proof of  $\mathrm{(\bf F2)}$ will be given in Lemma \ref{lemma4.3}.

Similar to Lemma \ref{lemma1}, we now prove that the mapping $U_{n+1}$ is a contraction.

\begin{lemm}\label{lemma2}
For $(\epsilon,\omega)\in A_{n+1}$ and $\epsilon\gamma^{-2}<\delta_2\leq\delta_1N^{1-\tau}_0$ small enough, there exists $K_2>0$  such that the mapping $U_{n+1}$ is a contraction in
\begin{align}\label{E4.33}
\mathcal{B}(0,\rho_{n+1}):=\left\{h\in W_{N_{n+1}}:\|h\|_{s}\leq\rho_{n+1}\right\},\quad \rho_{n+1}:={\epsilon \gamma^{-1} K_2}N_{n+1}^{-\sigma-1},
\end{align}
with $\epsilon\gamma^{-1}K_2<r$. Moreover, the unique fixed point ${h}_{n+1}(\epsilon,\omega)$ of $U_{n+1}$ also satisfies
\begin{align}\label{E3.11}
\|{h}_{n+1}\|_{s}\leq{\epsilon\gamma^{-1} K_2}N^{\tau-1}_{n+1}N^{-\beta}_{n}S_{n}.
\end{align}
\end{lemm}

\begin{proof}
It follows from \eqref{E4.53} and $\mathrm{(\bf P2)}$--$\mathrm{(\bf P3)}$ that
\begin{align*}
\|r_n\|_s{\leq}\epsilon C(\beta)N^{-\beta}_{n}S_n,\quad\|R_{n}(h)\|_s\leq\epsilon C\|h\|^2_s.
\end{align*}
Then combining \eqref{E4.13} with \eqref{E4.33} yields that for $\epsilon\gamma^{-1}<\delta_2$ small enough,
\begin{align}
\|U_{n+1}(h)\|_{s}
\leq&\textstyle\frac{\epsilon {K}'}{\gamma}N^{\tau-1}_{n+1}\|h\|^2_s+\frac{\epsilon {K}'}{\gamma}N^{\tau-1}_{n+1}N^{-\beta}_{n}S_{n}\label{E3.22}\\
\leq&\textstyle\frac{\epsilon {K}'}{\gamma}N^{\tau-1}_{n+1}\rho^2_{n+1}+\frac{\epsilon {K}'}{\gamma}N^{\tau-1}_{n+1}N^{-\beta}_{n}S_{n}.\nonumber
\end{align}
From the definition of $\rho_{n+1}$, $\mathrm{(\bf F2)}$ and \eqref{E5.35}, we derive
\begin{align}\label{E3.23}
\textstyle\frac{\epsilon {K}'}{\gamma}N^{\tau-1}_{n+1}\rho_{n+1}\leq\frac{1}{2},\quad\frac{\epsilon {K}'}{\gamma}N^{\tau-1}_{n+1}N^{-\beta}_{n}S_{n}\leq\frac{\rho_{n+1}}{2}.
\end{align}
This shows that $\|U_{n+1}(h)\|_{s}\leq \rho_{n+1}$. Moreover, taking the derivative  of  $U_{n+1}$ with respect to $h$ yields that
\begin{align}\label{E4.15}
\mathrm{D}U_{n+1}(h)[\mathfrak{w}]=-\epsilon\mathcal{L}^{-1}_{N_{n+1}}(\epsilon,\omega,w_n)\mathrm{P}_{N_{n+1}}(\Pi_{W}\mathrm{D}_{w}
\mathcal{F}_1(\epsilon,\omega,w_n+h)[\mathfrak{w}]-\Pi_W\mathrm{D}_{w}\mathcal{F}_1(\epsilon,\omega,w_n)[\mathfrak{w}]).
\end{align}
Consequently, for $\epsilon\gamma^{-1}<\delta_2$ small enough, it follows from $\mathrm{(\bf P1)}\text{--}\mathrm{(\bf P2)}$ and \eqref{E4.13} that
\begin{align}\label{E4.16}
\textstyle\|\mathrm{D}U_{n+1}(h)[\mathfrak{w}]\|_{s}\leq\frac{{K}'\epsilon}{\gamma}N^{\tau-1}_{n+1}
\|h\|_{s}\|\mathfrak{w}\|_{s}\leq\frac{{K}'\epsilon}{\gamma}N^{\tau-1}_{n+1}\rho_{n+1}\|\mathfrak{w}\|_{s}
\leq\frac{1}{2}\|\mathfrak{w}\|_{s}.
\end{align}
Hence $U_{n+1}$ is a contraction in $\mathcal{B}(0,\rho_{n+1})$.

 Let ${h}_{n+1}(\epsilon,\omega)$ denote the unique fixed point of $U_{n+1}$. By virtue of \eqref{E4.33}, \eqref{E3.22}--\eqref{E3.23}, we obtain
\begin{align*}
\textstyle\|h_{n+1}\|_{s}\leq\frac{1}{2}\|h_{n+1}\|_s+\frac{\epsilon {K}'}{\gamma}N^{\tau-1}_{n+1}N^{-\beta}_{n}S_{n}.
\end{align*}
This implies formula \eqref{E3.11}. Hence the proof is accomplished.
\end{proof}

Finally, let us complete the proof of Lemma \ref{lemma4.1}.
Since $w_n,w_{n+1}$, with $w_{n+1}=w_n+h_{n+1}$, are solutions of  equations \eqref{E0.19}, \eqref{E0.18}, respectively, it follows that
\begin{align}\label{E4.56}
\omega^2\partial_{tt}h_{n+1}=a\partial_{xx}h_{n+1}+\epsilon \mathrm{P}_{N_{n+1}}(\Pi_{W}\mathcal{F}_1(\epsilon,\omega,w_n+h_{n+1})-\Pi_{W}\mathcal{F}_1(\epsilon,\omega,w_n))&\nonumber\\
+\epsilon \mathrm{P}^{\perp}_{N_{n}}\mathrm{P}_{N_{n+1}}\Pi_{W}\mathcal{F}_1(\epsilon,\omega,w_n)&.
\end{align}
From $\gamma\in(0,1)$ and \eqref{E4.52}--\eqref{E4.53}, we establish that
\begin{align}\label{E4.57}
\textstyle\|\partial_{tt}h_{n+1}\|_{s}\stackrel{\mathrm{(\bf P2)}}{\leq}\frac{C'(\beta)}{\omega^{2}}
(\|h_{n+1}\|_{s+2}+\epsilon\|h_{n+1}\|_s+\epsilon N^{-\beta}_nS_n)
\stackrel{\eqref{E3.11}}{\leq}\frac{C''(\beta)\epsilon}{\gamma\omega^2}N^{\tau+1}_{n+1}N^{-\beta}_{n}S_{n}.
\end{align}
Then, by means of  \eqref{E5.35} and $\mathrm{(\bf F2)}$, we get
\begin{align*}
\textstyle\|\partial_{tt}h_{n+1}\|_{s}{\leq}\frac{K'_2\epsilon}{\gamma\omega^2}N^{-1}_{n+1}.
\end{align*}
Moreover, if $\epsilon\gamma^{-2}<\delta_3\leq\delta_2$ is small enough, then Lemmata \ref{lemma1}--\ref{lemma2} show that
\begin{align*}
\textstyle \|w_{n+1}\|_{s+\sigma}\leq\sum_{k=0}^{n+1}\|h_{k}\|_{s+\sigma}\stackrel{\eqref{E4.52}}
{\leq}\sum_{k=1}^{n+1}N^{\sigma}_{k}\frac{K_2\epsilon N^{-\sigma-1}_{k}}{\gamma}+N^{\sigma}_0\frac{\epsilon K_1 N^{\tau-1}_0}{\gamma }\leq1.
\end{align*}
This indicates  that \eqref{E4.10} holds  at ($n+1$)--th step.

As a consequence, we complete the proof of the inductive lemma.
\end{proof}

In addition, we also need to  obtain the upper bounds on the derivatives of $h_k$ with respect to $\omega$ on $s$-norm.
\begin{lemm}
For $(\epsilon,\omega)\in A_{k}$, if $\epsilon{\gamma^{-2}}<\delta_4\leq\delta_3$ is small enough, then the mapping $h_{k}=w_k-w_{k-1}$ is in $C^1(A_{k}\cap\{\epsilon<\delta_4\gamma^2\};W_{N_{k}})$ with
\begin{align}\label{E3.30}
\quad\|\partial_{\omega}w_0(\epsilon,\cdot)\|_{s}\leq \epsilon\gamma^{-2}K_3,
\quad\|\partial_{\omega}h_k(\epsilon,\cdot)\|_{s}\leq \epsilon\gamma^{-2}K_4 N_{k}^{-1},\quad\forall1\leq k\leq n
\end{align}
for some constants $K_3,K_4>0$.

\end{lemm}
\begin{proof}
The proof of the lemma is given by induction.
First, let us define
\begin{align*}
\textstyle\mathcal{U}_{0}(\epsilon,\omega,w):=w-\epsilon (\frac{1}{a}L_{\omega})^{-1}\frac{1}{a}\mathrm{P}_{N_0}\Pi_{W}\mathcal{F}_1(\epsilon,\omega,w).
\end{align*}
It follows from Lemma \ref{lemma1} that $\mathcal{U}_{0}(\epsilon,\omega,w_0)=0$, which leads to
\begin{align*}
\textstyle \mathrm{D}_{w}\mathcal{U}_{0}(\epsilon,\omega,w_0)=&\textstyle\mathrm{Id}-\epsilon (\frac{1}{a}L_{\omega})^{-1}\frac{1}{a}\mathrm{P}_{N_0}\Pi_{W}\mathrm{D}_{w}\mathcal{F}_1(\epsilon,\omega,w_0)\\
=&\textstyle\mathrm{Id}-\epsilon (\frac{1}{a}L_{\omega})^{-1}\frac{1}{a}\mathrm{P}_{N_0}\Pi_{W}\mathrm{D}_{w}\mathcal{F}(\epsilon,\omega,w_0).
\end{align*}
For $\epsilon{\gamma^{-2}}<\delta_4$ ($\epsilon\gamma^{-1}N^{\tau-1}_0< \delta_1$) small enough, it is invertible due to \eqref{E3.24}. Obviously, we can get  $w_0\in C^1(A_0\cap\{\epsilon<\delta_4\gamma^2\};W_{N_0})$ by the implicit function theorem. Taking the derivative of  $\mathcal{U}_{0}(\epsilon,\omega,w_0)=0$  with respect to $\omega$ yields that
\begin{align}
\partial_{\omega}w_{0}=&\textstyle\epsilon\partial_{\omega}(\frac{1}{a}L_{\omega})^{-1}\frac{1}{a}\mathrm{P}_{N_0}\Pi_{W}f(v(\epsilon,\omega,w_0)+w_0)\nonumber\\
&+
\textstyle\epsilon(\frac{1}{a}L_{\omega})^{-1}\frac{1}{a}\mathrm{P}_{N_0}\Pi_{W}f'(v(\epsilon,\omega,w_0)+w_0)(\partial_\omega v(\epsilon,\omega,w_0)+\mathrm{D}_{w}v(\epsilon,\omega,w_0)[\partial_{\omega}w_0]+\partial_{\omega}w_0).\nonumber
\end{align}
Moreover, taking the derivative of the identity $(\frac{1}{a}L_{\omega})(\frac{1}{a}L_{\omega})^{-1}\mathfrak{w}=\mathfrak{w}$ with respect to $\omega$ reads that
\begin{align*}
\textstyle \partial_{\omega}(\frac{1}{a}L_{\omega})^{-1}\mathfrak{w}=-(\frac{1}{a}L_{\omega})^{-1}
(\frac{2\omega}{a}\partial_{tt})(\frac{1}{a}L_{\omega})^{-1}\mathfrak{w}.
\end{align*}
From  the definition of $A_0$, Lemma \ref{lemma3.1} and Lemma \ref{lemma2.4}, we have that for $\epsilon{\gamma^{-2}}<\delta_4$ small enough,
\begin{align*}
\textstyle\|\partial_{\omega}w_0(\epsilon,\cdot)\|_{s}\stackrel{\eqref{E3.70},\eqref{E2.5}}{\leq}{K'\epsilon}{\gamma^{-2}}+\frac{1}{2}\|w_0(\epsilon,\cdot)\|_s,
\end{align*}
which carries out $\|\partial_{\omega}w_0(\epsilon,\cdot)\|_s\leq{\epsilon}{\gamma^{-2}K_3}$. Combining this with \eqref{E4.52} presents that for $\epsilon\gamma^{-2}< \delta_4$ small enough,
\begin{align}\label{E3.28}
\|\partial_{\omega}w_0(\epsilon,\cdot)\|_{s+\beta}{\leq}{\bar{K}}{\gamma^{-1}}.
\end{align}

Next, suppose that we could obtain a solution $h_{k}\in C^1(A_{k}\cap\{\epsilon<\delta_4\gamma^2\};W_{N_{k}}),1\leq k\leq n$ satisfying \eqref{E3.30}. Hence it is straightforward that
\begin{align}\label{E4.39}
\|\partial_{\omega}w_{n}(\epsilon,\cdot)\|_{s}\leq{\bar{K}\epsilon}{\gamma^{-2}}
\end{align}
for some $\bar{K}>0$. Let us define
\begin{align*}
\mathcal{U}_{n+1}(\epsilon,\omega,h):=-L_{\omega}(w_{n}+h)+\epsilon \mathrm{P}_{N_{n+1}}\Pi_{W}\mathcal{F}_1(\epsilon,\omega,w_n+h).
\end{align*}
Lemma \ref{lemma2} shows that $\mathcal{U}_{n+1}(\epsilon,\omega,h_{n+1})=0$. Combining this with {\eqref{E4.32}} and {\eqref{E4.15}} gives rise to
\begin{align}\label{E8.5}
\mathrm{D}_{h}\mathcal{U}_{n+1}(\epsilon,\omega,h_{n+1})
=\mathcal{L}_{N_{n+1}}(\epsilon,\omega,w_{n+1})=\mathcal{L}_{N_{n+1}}(\epsilon,\omega,w_{n})(\mathrm{Id}-\mathrm{D}U_{n+1}(h_{n+1})).
\end{align}
Then, because of \eqref{E4.16}, $\mathcal{L}_{N_{n+1}}(\epsilon,\omega,w_{n+1})$ is invertible with
\begin{align}\label{E4.19}
\|\mathcal{L}^{-1}_{N_{n+1}}(\epsilon,\omega,w_{n+1})\mathfrak{w}\|_{s}
\stackrel{\eqref{E4.13}}{\leq}{2K}{\gamma^{-1}}N_{n+1}^{\tau-1}\|\mathfrak{w}\|_{s}.
\end{align}
Hence, from the  implicit function theorem, it follows that
 \[h_{n+1}\in C^1(A_{n+1}\cap\{\epsilon<\delta_4\gamma^2\};W_{N_{n+1}}).\]
As a result,
\begin{align*}
\partial_{\omega}\mathcal{U}_{n+1}(\epsilon,\omega,h_{n+1})+\mathrm{D}_{h}\mathcal{U}_{n+1}(\epsilon,\omega,h_{n+1})\partial_{\omega}h_{n+1}=0.
\end{align*}
This gives rise to
\begin{align}\label{E4.20}
\partial_{\omega}h_{n+1}=-\mathcal{L}^{-1}_{N_{n+1}}(\epsilon,\omega,w_{n+1})\partial_{\omega}\mathcal{U}_{n+1}(\epsilon,\omega,h_{n+1}),
\end{align}
where
\begin{align}\label{E3.31}
\partial_{\omega}\mathcal{U}_{n+1}(\epsilon,\omega,h_{n+1})=&-2\omega(h_{n+1})_{tt}+\epsilon \mathrm{P}^{\bot}_{N_{n}}\mathrm{P}_{N_{n+1}}\Pi_{W}\partial_{\omega}\mathcal{F}_1(\epsilon,\omega,w_n)\nonumber\\
&+\epsilon \mathrm{P}_{N_{n+1}}\left(\Pi_{W}\partial_{\omega}\mathcal{F}_1(\epsilon,\omega,w_{n}+h_{n+1})-\Pi_{W}\partial_{\omega}\mathcal{F}_1(\epsilon,\omega,w_n)\right).
\end{align}
For $\epsilon\gamma^{-2}< \delta_4$ small enough, because of Lemma \ref{lemma3.1}, Lemma \ref{lemma2.4} and formula \eqref{E2.1}, one has
\begin{align*}
&\|\Pi_{W}\partial_{\omega}\mathcal{F}_1(\epsilon,\omega,w_n)\|_{s+\beta}{\leq} C(\beta)\|w_n\|_{s+\beta}(1+\|\partial_{\omega}w_n\|_s)+C(\beta)(1+\|\partial_{\omega}w_n\|_{s+\beta}),\\
&\|\Pi_{W}\partial_{\omega}\mathcal{F}_1(\epsilon,\omega,w_{n}+h_{n+1})-\Pi_{W}\partial_{\omega}\mathcal{F}_1(\epsilon,\omega,w_n)\|_{s}\leq C(1+\|\partial_{\omega} w_n\|_{s})\|h_{n+1}\|_s,\\
&\|\Pi_{W}\partial_{\omega}\mathcal{F}_1(\epsilon,\omega,w_{n}+h_{n+1})-\Pi_{W}\partial_{\omega}\mathcal{F}_1(\epsilon,\omega,w_n)\|_{s+\beta}
\leq C(\beta)(1+\|\partial_{\omega} w_n\|_{s})\|h_{n+1}\|_{s+\beta}\\
&\quad\quad\quad\quad\quad\quad\quad\quad\quad\quad\quad\quad\quad\quad\quad\quad\quad\quad\quad\quad\quad\quad\quad\quad+C(\beta)\|w_n\|_{s+\beta}(1+\|\partial_{\omega} w_n\|_{s})\|h_{n+1}\|_{s}\\
&\quad\quad\quad\quad\quad\quad\quad\quad\quad\quad\quad\quad\quad\quad\quad\quad\quad\quad\quad\quad\quad\quad\quad\quad+C(\beta)(1+\|\partial_{\omega} w_n\|_{s+\beta})\|h_{n+1}\|_s.
\grave{}\end{align*}
In addition, if we set $S'_{n}:=1+\|\partial_{\omega}{w_{n}}\|_{s+\beta}$,  we can claim that for $\epsilon{\gamma^{-2}}<\delta_4$ small enough,
\begin{align*}
\textstyle \boldsymbol{\mathrm{(F3)}}:\quad  S'_{n}\leq \tilde{C}_1(1+\bar{K})\gamma^{-1}N^{2\tau+\sigma+{\frac{1}{\chi-1}{(\tau-1+\sigma)}}}_{n+1}
\end{align*}
where $\tilde{C}_1:=\tilde{C}_1(\chi,\mathfrak{q},\tau,\sigma)$ is a positive constant. The proof of $\mathrm{(\bf F3)}$  will be given in  Lemma \ref{lemma4.8}.

Then it follows from \eqref{E3.11}, \eqref{E4.53}, \eqref{E4.57} and \eqref{E4.39} that for ${\epsilon}{\gamma^{-2}}<\delta_4$  small enough,
\begin{align}\label{E3.34}
\|\partial_{\omega}\mathcal{U}_{n+1}(\epsilon,\omega,h_{n+1})\|_{s}
{\leq}&\epsilon \gamma^{-1}C'(\beta)N^{\tau+1}_{n+1}N^{-\beta}_{n}S_{n}+\epsilon C'(\beta)N^{-\beta}_{n}S'_{n},
\end{align}
which then gives
\begin{align*}
\|\partial_{\omega}h_{n+1}\|_s\stackrel{\eqref{E4.19}}{\leq}\epsilon\gamma^{-2} K'N^{2\tau}_{n+1}N^{-\beta}_{n}S_n+\epsilon \gamma^{-1}K'N^{\tau-1}_{n+1}N^{-\beta}_{n}S'_n.
\end{align*}
Furthermore, it can be obtained from \eqref{E4.5} that
\begin{align}\label{E3.29}
N_{n+1}\leq e^{\mathfrak{q}\chi^{n+1}}<N_{n+1}+1<2N_{n+1}.
 \end{align}
Thus using $\mathrm{(\bf F2)}$--$\mathrm{(\bf F3)}$ and \eqref{E5.35} yields that
\begin{align*}
\|\partial_{\omega} h_{n+1}\|_{s}\leq \epsilon\gamma^{-2}K_{4}N_{n+1}^{-1}.
\end{align*}
The proof of the lemma is now accomplished.
\end{proof}

Let us give  the proof of $\mathrm{(\bf F2)}$.

\begin{lemm}\label{lemma4.3}
Let $(\epsilon,\omega)\in A_{n}$. If ${\epsilon}{\gamma^{-2}}<\delta_2\leq\delta_1N^{1-\tau}_0$ is  small enough, then $\mathrm{(\bf F2)}$ holds.
\end{lemm}
\begin{proof}
For ${\epsilon}{\gamma^{-2}}<\delta_2$ small enough, it can be asserted that
\begin{align}\label{E4.36}
S_{n}\leq (1+N^{\tau-1+\sigma}_{n})S_{n-1}.
\end{align}
Then, by virtue of \eqref{E4.5}, \eqref{E3.29} and \eqref{E3.12}, we obtain
\begin{align*}
S_{n}{\leq}&S_0\prod^{n}_{k=1}(1+N^{\tau-1+\sigma}_k)\leq S_0\prod^{n}_{k=1}(1+e^{\mathfrak{q}\chi^k(\tau-1+\sigma)})\\
=&S_0\prod^{n}_{k=1}e^{\mathfrak{q}\chi^k(\tau-1+\sigma)}\prod^{n}_{k=1}(1+e^{-\mathfrak{q}\chi^k(\tau-1+\sigma)})\\
\leq &\tilde{C}(1+\bar{K})N_{n+1}^{\frac{1}{\chi-1}(\tau-1+\sigma)},
\end{align*}
where $\tilde{C}=2^{\frac{1}{\chi-1}
(\tau-1+\sigma)}\prod^{\infty}_{k=1}(1+e^{-\mathfrak{q}\chi^k(\tau-1+\sigma)})$.

It remains to check that \eqref{E4.36} holds. Using the definition of $S_n$ yields that
\begin{align}\label{up}
S_{n}\leq1+\|w_{n-1}\|_{s+\beta}+\|h_{n}\|_{s+\beta}=S_{n-1}+\|h_n\|_{s+\beta}.
\end{align}
Let us verify the upper bound of $\|h_n\|_{s+\beta}$. In view of $\mathrm{(\bf P1)}$--$\mathrm{(\bf P3)}$ and \eqref{E4.33}, we derive
\begin{align*}
&\|r_{n-1}\|_{s}{\leq}\epsilon C,\quad\|R_{n-1}(h_{n})\|_{s}{\leq}\epsilon C\rho^2_n,\quad\|r_{n-1}\|_{s+\beta}\leq \epsilon C(\beta)S_{n-1},\\
&\|R_{n-1}(h_{n})\|_{s+\beta}{\leq}
\epsilon C(\beta)(\rho^2_{n}S_{n-1}+\rho_{n}\|h_n\|_{s+\beta}).
\end{align*}
Since $h_{n}=\mathcal{L}^{-1}_{N_{n}}(\epsilon,\omega,w_{n-1})(R_{n-1}(h_{n})+r_{n-1})$, according to \eqref{E4.12}, \eqref{E4.52} and \eqref{E4.33}, one has
\begin{align*}
\|h_n\|_{s+\beta}{\leq}{\epsilon\gamma^{-1} K'}N^{\tau-1+\sigma}_{n}S_{n-1}+{\epsilon \gamma^{-1}K'}N^{\tau-1}_{n}\rho_n\|h_n\|_{s+\beta}.
\end{align*}
From $\tau\in(1,2)$ and the definition of $\sigma$ (recall \eqref{E0.17}), it follows that  $\sigma>\tau-1$. Then there exists $\delta_2>0$ small enough with ${\epsilon}{\gamma^{-1}}\leq{\epsilon}{\gamma^{-2}}<\delta_2$ such that ${\epsilon \gamma^{-1}K'}\leq\frac{1}{2}$, which implies that
\begin{align}\label{E4.17}
\|h_n\|_{s+\beta}\leq{2\epsilon\gamma^{-1} K'}N^{\tau-1+\sigma}_{n}S_{n-1}\leq N^{\tau-1+\sigma}_{n}S_{n-1}.
\end{align}
Substituting this into \eqref{up} gives rise to \eqref{E4.36}. Hence we arrive at the conclusion of the lemma.
\end{proof}

In order to verify the fact $\mathrm{(\bf F3)}$, we need to estimate the upper bound of $\mathcal{L}^{-1}_{N_{n}}(\epsilon,\omega,w_{n})\mathfrak{w}$ on $(s+\beta)$-norm for all $\mathfrak{w}\in W_{N_{n}}$, where $\mathcal{L}_{N_{n}}(\epsilon,\omega,w_{n})$ is as seen in  \eqref{E8.5}.

\begin{lemm}
For $(\epsilon,\omega)\in A_{n}$ and $\epsilon{\gamma^{-2}}<\delta_4\leq\delta_3$ small enough, one has
\begin{align}\label{E:inversion}
\|\mathcal{L}^{-1}_{N_{n}}(\epsilon,\omega,w_{n})\mathfrak{w}\|_{s+\beta}\leq&{K_5}{\gamma^{-1}}N^{\tau-1}_{n}\|\mathfrak{w}\|_{s+\beta}\nonumber\\
&+{K_5}{\gamma^{-1}}N^{2(\tau-1)}_{n}(\|w_{n-1}\|_{s+\beta+\sigma}+\|h_{n}\|_{s+\beta})\|\mathfrak{w}\|_{s}
\end{align}
for some constant $K_5>0$, where  $\beta,\sigma$ are given  by \eqref{E5.35}, \eqref{E0.17}, respectively.
\end{lemm}
\begin{proof}
If we denote $\mathcal{G}(h_{n}):=(\mathrm{Id}-\mathrm{D}U_{n}(h_{n}))^{-1}\mathfrak{w}$, then it is evident that
\begin{align*}
\mathcal{G}(h_{n})=\mathfrak{w}+\mathrm{D}U_{n}(h_{n})\mathcal{G}(h_{n}).
\end{align*}
Applying \eqref{E4.12}, \eqref{E4.15} and $\mathrm{(\bf P2)}$ reads that
\begin{align*}
\|\mathrm{D}U_{n}(h_{n})\|_{s+\beta}\leq
{\epsilon \gamma^{-1}K'}N^{\tau-1}_{n}(\|w_{n-1}\|_{s+\beta+\sigma}\|h_{n}\|_{s}+\|h_{n}\|_{s+\beta}).
\end{align*}
Furthermore, it can be obtained from \eqref{E4.16} that $\|\mathcal{G}(h_{n})\|_s\leq2\|\mathfrak{w}\|_s$. Then combining this  with \eqref{E2.1}, \eqref{E4.16} and \eqref{E4.33} yields that
\begin{align*}
\|\mathcal{G}(h_{n})\|_{s+\beta}
{\leq}&\|\mathfrak{w}\|_{s+\beta}+2{\epsilon \gamma^{-1} {K}'}N^{\tau-1}_{n}(\|w_{n-1}\|_{s+\beta+\sigma}\|h_{n}\|_{s}+\|h_{n}\|_{s+\beta})\|\mathfrak{w}\|_{s}\\
&+{\epsilon \gamma ^{-1}K'}N^{\tau-1}_{n}\rho_{n}\|\mathcal{G}(h_{n})\|_{s+\beta}.
\end{align*}
By the fact $\tau\in(1,2)$, the definition of $\sigma$ shows that  $\sigma>\tau-1$. Owing to both this and \eqref{E4.33}, we can deduce that for $\epsilon{\gamma^{-1}}\leq\epsilon{\gamma^{-2}}<\delta_4$ small enough,
\begin{align*}
\|\mathcal{G}(h_{n})\|_{s+\beta}\leq&2\|\mathfrak{w}\|_{s+\beta}+{4\epsilon \gamma^{-1} {K}'}N^{\tau-1}_{n}(\|w_{n-1}\|_{s+\beta+\sigma}\|h_{n}\|_{s}+\|h_{n}\|_{s+\beta})\|\mathfrak{w}\|_{s}.
\end{align*}
If $\epsilon{\gamma^{-2}}<\delta_4$ is small enough,  then \eqref{E:inversion} follows from \eqref{E4.13}--\eqref{E4.12}. Thus the proof of the lemma is now completed.
\end{proof}

It remains to check the fact $\mathrm{(\bf F3)}$.

\begin{lemm}\label{lemma4.8}
Let $(\epsilon,\omega)\in A_{n}$. If ${\epsilon}{\gamma^{-2}}<\delta_4\leq\delta_3$ is mall enough, then $\mathrm{(\bf F3)}$ holds.
\end{lemm}
\begin{proof}
For ${\epsilon}{\gamma^{-2}}<\delta_4$  small enough,  let us claim that for  some constant $K'>0$,
\begin{align}
S'_{n}\leq(1+N^{\tau-1}_{n})S'_{n-1}+{K'}{\gamma^{-1}}N^{2\tau+\sigma}_{n}S_{n-1}.\label{E4.40}
\end{align}
If we take $\alpha_1:=\tau-1$, $\alpha_2:=2\tau+\sigma$, $\alpha_3:=\tau-1+\sigma$, then a simple calculation gives that
\begin{align*}
\textstyle S'_{n}\leq \mathcal{S}_1+\mathcal{S}_2 ,\text{ with }\mathcal{S}_1=S'_{0}\prod^{n}_{k=1}\left(1+N^{\alpha_1}_{k}\right),~\mathcal{S}_2=\sum^n_{k=1}\mathcal{S}_{2,k},
\end{align*}
where $\mathcal{S}_{2,1}={K'}{\gamma^{-1}}N^{\alpha_2}_{n}S_{n-1}$ and for $2\leq k\leq n$,
\begin{align*}
\mathcal{S}_{2,k}={K'}{\gamma^{-1}}\prod_{j=2}^k(1+N^{\alpha_1}_{n-(j-2)})N^{\alpha_2}_{n-(k-1)}S_{n-k}.
\end{align*}
Since the upper bound on $\mathcal{S}_1$ is proved in the similar way as shown in Lemma \ref{lemma4.3},  the detail is omitted. Hence using \eqref{E3.28} yields that
\begin{align*}
\mathcal{S}_1\leq{C}'_1 (1+\bar{K})\gamma^{-1}N^{\frac{1}{\chi-1}\alpha_1}_{n+1}.
\end{align*}
On the one hand, it follows from \eqref{E4.5} and $\mathrm{(\bf F2)}$ that
\begin{align*}
\mathcal{S}_{2,1}\leq K'\gamma^{-1}N^{\alpha_2}_{n}\tilde{C}(1+\bar{K})N_{n}^{\frac{1}{\chi-1}\alpha_3}
\leq C'_1(1+\bar{K})\gamma^{-1}N^{\alpha_2+\frac{1}{\chi-1}\alpha_3}_{n+1}.
\end{align*}
And on the other hand, a simple computation yields that
\begin{align*}
\sum\limits_{k=2}^{n} \mathcal{S}_{2,k}&\leq K'S_0\gamma^{-1}\sum\limits_{k=2}^{n}e^{\alpha_1\mathfrak{q}\frac{\chi^{n+1}-\chi^{n+2-k}}{\chi-1}}
e^{\alpha_2\mathfrak{q}\chi^{n+1-k}}e^{{\alpha_3}\mathfrak{q}\frac{\chi^{n+1-k}}{\chi-1}}\nonumber\\
&\leq\frac{K'S_0}{\gamma\omega}e^{\alpha_1\mathfrak{q}\frac{\chi^{n+1}}{\chi-1}}
\sum\limits_{k=2}^{n}e^{{(-\alpha_1+(\chi-1)\alpha_2+\alpha_3)}\mathfrak{q}\frac{\chi^{n+2-k}}{\chi-1}}\nonumber\\
&\leq C'_1(1+\bar{K})\gamma^{-1} N^{\alpha_2+\frac{1}{\chi-1}\alpha_3}_{n+1}.
\end{align*}

The remainder of the lemma is to prove \eqref{E4.40}. It follows from the definition of $S'_{n}$ that
\begin{align*}
S'_{n}\leq1+\|\partial_{\omega}w_{n-1}\|_{s+\beta}+\|\partial_{\omega}h_n\|_{s+\beta}
=S'_{n-1}+\|\partial_{\omega}h_n\|_{s+\beta}.
\end{align*}
We just establish the upper bound of $\partial_{\omega}h_n$ on $(s+\beta)$-norm. Formulae \eqref{E4.20} and \eqref{E:inversion} read that
\begin{align*}
\|\partial_{\omega}h_{n}\|_{s+\beta}\leq&{K_5}{\gamma^{-1}}N^{\tau-1}_{n}\|\partial_{\omega}\mathcal{U}_{n}(\epsilon,\omega,h_n)\|_{s+\beta}\nonumber\\
&+{ K_5}{\gamma^{-1}}N^{2(\tau-1)}_{n}(\|w_{n-1}\|_{s+\beta+\sigma}+\|h_{n}\|_{s+\beta})\|\partial_{\omega}\mathcal{U}_{n}(\epsilon,\omega,h_n)\|_{s}.
\end{align*}
Let us estimate the upper bound of $\|(h_{n})_{tt}\|_{s+\beta}$. For ${\epsilon}{\gamma^{-2}}<\delta_4$  small enough,
it follows  from \eqref{E4.56}, \eqref{E4.52}, \eqref{E4.17}, $\mathrm{(\bf P2)}$ and \eqref{E4.33} that
\begin{align*}
\|(h_{n})_{tt}\|_{s+\beta}\stackrel{}\leq{C'(\beta)}{\omega^{-2}}N^{\tau+1+\sigma}_{n}S_{n-1}.
\end{align*}
Then for ${\epsilon}{\gamma^{-2}}<\delta_4$  small enough, it can be seen from  \eqref{E3.31}, \eqref{E4.39} and \eqref{E4.17} that
\begin{align*}
\|\partial_{\omega}\mathcal{U}_{n}(\epsilon,\omega,h_n)\|_{s+\beta}\leq {C''(\beta)}N^{\tau+1+\sigma}_{n}S_{n-1}+\epsilon C''(\beta)S'_{n-1}.
\end{align*}
According to \eqref{E3.34},  \eqref{E5.35} and $\mathrm{(\bf F2)}$--$\mathrm{(\bf F3)}$, it is straightforward that
\begin{align*}
\|\partial_{\omega}\mathcal{U}_{n+1}(\epsilon,\omega,h_{n+1})\|_{s}\leq 2\epsilon \gamma^{-1}C'(\beta).
\end{align*}
Consequently, by means of  \eqref{E4.17}, we derive
\begin{align*}
\|\partial_{\omega}h_{n}\|_{s+\beta}\leq&{K'}{\gamma^{-1}}N^{2\tau+\sigma}_{n}S_{n-1}+N^{\tau-1}_{n}S'_{n-1}
\end{align*}
for ${\epsilon}{\gamma^{-2}}<\delta_4$  small enough. This gives rise to \eqref{E4.40}. Thus this ends the proof.
\end{proof}

\section{Proof of the main result}\label{sec:proof}
In this section, we  are devoted to accomplishing the proof of the main result given by Theorem \ref{Th1}. Our first purpose is to present that $(\epsilon,\omega) $ belong to a Cantor set of positive measure, asymptotically full as $\epsilon$ tends to zero.
 From Lemma \ref{lemma4.1}, define
\begin{align}\label{E0.2}
A_\gamma:=\{(\epsilon,\omega)\in A_n, \forall n\in\mathbb{N}, \epsilon<\delta\gamma^2\},
\end{align}
where $\delta\in(0,\delta_4]$. Let us establish the measure estimate on $A_\gamma$.

\begin{lemm}\label{lemma4}
Let $I:=(\tilde{\omega}_1,\tilde{\omega}_2)$ with $\frac{1}{2}\leq\tilde{\omega}_1<\tilde{\omega}_2\leq\frac{3}{2}$. There exists some constant $\delta\in(0,\delta_4]$ such that for all $\gamma\in(0,1)$,  the Lebesgue measures of the Cantor set $A_\gamma$
and its sections $A_{\gamma}(\epsilon):=\{\omega:(\epsilon,\omega)\in A_\gamma\}$ have the following property: there exists some constant $\mathrm{C}>0$, independent on $\gamma $ and $\epsilon$, such that for all $\epsilon<\delta\gamma^2$,
\begin{align}\label{E0.10}
\frac{\mathrm{meas}(I\cap A_{\gamma}(\epsilon))}{\mathrm{meas}(I)}>1-\mathrm{C}\gamma,\quad \frac{\mathrm{meas(B_\gamma\cap A_{\gamma})}}{\mathrm{meas}(B_\gamma)}>1-\mathrm{C}\gamma,
\end{align}
where $B_\gamma$ is the rectangular region $(0,\delta\gamma^2)\times I$.
\end{lemm}
\begin{proof}
For fixed $\epsilon\in(\epsilon_1,\epsilon_2)$, we define
\begin{align*}
\mathrm{R}_{n}=A_n(\epsilon)\backslash A_{n+1}(\epsilon), \quad\forall n\in\mathbb{N}.
\end{align*}
Let us present that $\bigcup_{n\in\mathbb{N}} \mathrm{R}_{n}\cap (\tilde{\omega}_1,\tilde{\omega}_2)$ is a small measure set. Denote
\begin{align*}
&\textstyle\Omega^{n,1}_{j,l}:=\left\{\omega\in(\frac{1}{2},\frac{3}{2}):|\omega(\lambda^{\pm}_{l}(\epsilon,\omega,w_n))^{\frac12}-j|\leq\frac{\gamma}{j^{\tau}}\right\},
\quad\textstyle\Omega^{n,2}_{j,l}:=\left\{\omega\in(\frac{1}{2},\frac{3}{2}):|\frac{\omega l}{c}-j|\leq\frac{\gamma}{j^{\tau}}\right\}.
\end{align*}
The fact $\gamma<1$ implies that $\Omega^{n,2}_{j,0}=\emptyset$. Moreover, if we denote $\tilde{A}_{0}:=A_0$ and for all $n\in\mathbb{N}^{+}$,
\begin{align*}
\textstyle\tilde{A}_{n}:=\left\{(\epsilon,\omega)\in \tilde{A}_{n-1}:|\omega(\lambda^{\pm}_{l}(\epsilon,\omega,w_{n-1}))^{\frac12}-j|>\frac{\gamma}{j^{\tau}},\forall 1\leq j\leq N_n, \forall l\geq0\right\},
\end{align*}
then $\mathrm{R}_n\subseteq \mathrm{R}_{n,1} \cup\mathrm{R}_{n,2}$, where
\begin{align*}
\mathrm{R}_{n,1}=&\textstyle\bigcup_{1\leq j\leq N_{n+1},l\geq 0}\Omega^{n,1}_{j,l}\cap \tilde{A}_{n}(\epsilon)\cap (\tilde{\omega}_1,\tilde{\omega}_2),\\
\mathrm{R}_{n,2}=&\textstyle\bigcup_{N_n< j\leq N_{n+1},l\geq1}\Omega^{n,2}_{j,l}\cap (\tilde{\omega}_1,\tilde{\omega}_2),
\end{align*}

First, we need to  estimate the measure of $\mathrm{R}_{n,1}$. In view of \eqref{E6.28}, it follows that for some constants $C_1>0,C_2>0$,
\begin{align} \label{E0.13}
C_1 j<\omega l< C_2 j, \quad \forall l\geq  \max\{J_0,\lfloor(2\Theta c)^{\frac12}\rfloor+1\}+1,\forall \omega \in\Omega^{n,1}_{j,l}.
\end{align}
In particular, observe that
\begin{align*}
C_1 j<\omega(\lambda^{\pm}_l(\epsilon,\omega,w_n))^{\frac12}<C_2 j,\quad 0\leq l\leq \max\{J_0,\lfloor(2\Theta c)^{\frac12}\rfloor+1\}.
\end{align*}
Moreover, Lemma \ref{lemma4.1} shows that
\begin{align*}
\textstyle\|w_n\|_{s}\leq\sum_{k=0}^{n}\|h_{k}\|_{s}
\stackrel{\eqref{E4.11}}{\leq}\sum_{k=1}^{n}{K_2\epsilon \gamma^{-1}N^{-\sigma-1}_{k}}+{ K_1\epsilon\gamma^{-1} N^{\tau-1}_0}< r.
\end{align*}
Hence it can be seen that $|\lambda^{\pm}_l(\epsilon,\omega,w_n)|\geq \Gamma_0>0$ (recall \eqref{E:min}). As a result, combining this with formula \eqref{E:continue2} yields that for all $\omega_1,\omega_2\in \tilde{A}_{n}(\epsilon)$ with $\omega_1<\omega_2$,
\begin{align}\label{E0.5}
&|(\lambda^{\pm}_{l}(\epsilon,\omega_2,w_n(\epsilon,\omega_2)))^{\frac12}
-(\lambda^{\pm}_{l}(\epsilon,\omega_1,w_n(\epsilon,\omega_1)))^{\frac12}|\nonumber\\
&=\frac{|\lambda_{l}^{\pm}(\epsilon,\omega_2,w_n(\epsilon,\omega_{2}))-\lambda^{\pm}_{l}(\epsilon,\omega_1,w_n(\epsilon,\omega_1))|}
{(\lambda^{\pm}_{l}(\epsilon,\omega_2,w_n(\epsilon,\omega_2)))^{\frac12}+
(\lambda^{\pm}_{l}(\epsilon,\omega_1,w_n(\epsilon,\omega_1)))^{\frac12}}\nonumber\\
&\leq\frac{\epsilon\kappa}{\sqrt{\Gamma_0}}|\omega_1-\omega_2|
+\frac{\epsilon\kappa}{\sqrt{\Gamma_0}}\|w_n(\epsilon,\omega_1)-w_n(\epsilon,\omega_2)\|_s.
\end{align}
For ${\epsilon}{\gamma^{-1}}\leq{\epsilon}{\gamma^{-2}}<\delta$ small enough, it follows from  \eqref{E6.28} and \eqref{E4.39} that
\begin{align}\label{E0.3}
&|\omega_2(\lambda^{\pm}_{l}(\epsilon,\omega_2,w_n(\epsilon,\omega_2)))^{\frac12}
-\omega_1(\lambda^{\pm}_{l}(\epsilon,\omega_1,w_n(\epsilon,\omega_1)))^{\frac12}|
\nonumber\\
&\geq\textstyle|\omega_2-\omega_1|\frac{l}{2c}-\frac{\epsilon\kappa}{\sqrt{\Gamma_0}}|\omega_1-\omega_2|
-\frac{\epsilon\kappa}{\sqrt{\Gamma_0}}\frac{\bar{{K}}\epsilon}{\gamma^2}|\omega_1-\omega_2|\nonumber\\
&\geq\textstyle |\omega_2-\omega_1|\frac{l}{4c},\quad\quad \quad\forall l\geq \max\{J_0,\lfloor(2\Theta c)^{\frac12}\rfloor+1\}+1.
\end{align}
In particular, if ${\epsilon}{\gamma^{-2}}<\delta$ is small enough, then for $0\leq l\leq \max\{J_0,\lfloor(2\Theta c)^{\frac12}\rfloor+1\}$,
\begin{align}\label{E0.15}
\textstyle|\omega_2(\lambda^{\pm}_{l}(\epsilon,\omega_2,w_n(\epsilon,\omega_2)))^{\frac12}
-\omega_1(\lambda^{\pm}_{l}(\epsilon,\omega_1,w_n(\epsilon,\omega_1)))^{\frac12}|\geq|\omega_2-\omega_1|\frac{\sqrt{\Gamma_0}}{4}.
\end{align}
Consequently, if $\Omega^{n,1}_{j,l}\cap \tilde{A}_{n}(\epsilon)\cap(\tilde{\omega}_1,\tilde{\omega}_2)$ is nonempty, then using \eqref{E0.3}--\eqref{E0.15} and \eqref{E0.13} yields that
\begin{align}
&\textstyle\mathrm{meas}(\Omega^{n,1}_{j,l})\leq\frac{8\gamma}{\sqrt{\Gamma_0}j^{\tau}},\quad 0\leq l\leq\max\{J_0,\lfloor(2\Theta c)^{\frac12}\rfloor+1\},\label{new2}\\
&\textstyle\mathrm{meas}(\Omega^{n,1}_{j,l})\leq\frac{8c\gamma}{lj^{\tau}}\leq\frac{C\tilde{\omega}_2\gamma}{j^{\tau+1}},\quad \forall l\geq \max\{J_0,\lfloor(2\Theta c)^{\frac12}\rfloor+1\}+1,l\in\left(\frac{C_1}{\tilde{\omega}_2}j,\frac{C_2}{\tilde{\omega}_1}j\right)=:\mathcal{I}(j),j\geq1.\label{E0.4}
\end{align}
Because of \eqref{new2}--\eqref{E0.4}, we establish that
\begin{align*}
\textstyle\mathrm{meas}\left(\bigcup_{1\leq j\leq N_1,l\geq0}\Omega^{n,1}_{j,l}\cap (\tilde{\omega}_1,\tilde{\omega}_2)\right){\leq}\mathrm{C}_1\gamma\sum_{1\leq j\leq N_1}\frac{1}{j^{\tau}}.
\end{align*}
This implies that
\begin{align}\label{zero}
\textstyle\mathrm{meas}\left(\mathrm{R}_{0,1}\right){\leq}\mathrm{C}_1\gamma\sum_{1\leq j\leq N_1}\frac{1}{j^{\tau}}.
\end{align}
Our next goal is to give the measure of $\mathrm{R}_{n,1}$ for $n\in\mathbb{N}^{+}$. Let us consider either $N_n<j\leq N_{n+1}$ or $1\leq j\leq N_n$. In the first case, it follows from \eqref{new2}--\eqref{E0.4} that
\begin{align*}
\textstyle\mathrm{meas}\left(\bigcup_{N_n<j\leq N_{n+1},l\geq0}\Omega^{n,1}_{j,l}\cap \tilde{A}_n(\epsilon)\cap (\tilde{\omega}_1,\tilde{\omega}_2)\right)\leq\mathrm{C}_1\gamma\sum_{N_n<j\leq N_{n+1}}\frac{1}{j^{\tau}}.
\end{align*}
And in the latter, formulae \eqref{E0.5} and \eqref{E3.11} read that
\begin{align*}
\textstyle|\omega(\lambda^{\pm}_{l}(\epsilon,\omega,w_{n-1}))^{\frac12}-j|\leq\frac{\gamma}{j^{\tau}}+\frac{\epsilon\kappa}{\sqrt{\Gamma_0}}
\|w_n-w_{n-1}\|_s\leq\frac{\gamma}{j^{\tau}}+\frac{\epsilon^2 \mathrm{C}'}{\gamma}N^{\tau-1}_{n}N^{-\beta}_{n-1}S_{n-1}.
\end{align*}
It is easy to see that for $1\leq j\leq N_n$,
\begin{align*}
\textstyle\Omega^{n,1}_{j,l}\cap \tilde{A}_n(\epsilon)\subseteq\left\{\omega:\frac{\gamma}{j^{\tau}}<|\omega(\lambda^{\pm}_{l}(\epsilon,\omega,w_{n-1}))^{\frac12}-j|
\leq\frac{\gamma}{j^{\tau}}+\frac{\epsilon^2\mathrm{C}'}{\gamma}N^{\tau-1}_{n}N^{-\beta}_{n-1}S_{n-1}\right\}.
\end{align*}
Consequently, it can be obtained from \eqref{E:min} that
\begin{align*}
\textstyle\mathrm{meas}\left(\Omega^{n,1}_{j,l}\cap \tilde{A}_n(\epsilon)\cap (\tilde{\omega}_1,\tilde{\omega}_2)\right)\leq\frac{\epsilon^2 \mathrm{C}''}{\gamma \sqrt{\Gamma_0}}N^{\tau-1}_{n}N^{-\beta}_{n-1}S_{n-1}.
\end{align*}
From the fact $\mathrm{(\bf F2)}$, \eqref{new2}--\eqref{E0.4} and \eqref{E5.35}, one has that for $1\leq j\leq N_n$,
\begin{align*}
\textstyle \mathrm{meas}\left(\bigcup_{1\leq j\leq N_n,l\geq0}\Omega^{n,1}_{j,l}\cap \tilde{A}_n(\epsilon)\cap (\tilde{\omega}_1,\tilde{\omega}_2)\right)\leq\sum_{1\leq j\leq N_n}\frac{\epsilon^2 \mathrm{C}'''j}{\sqrt{\Gamma_0}\gamma}N^{\tau-1}_{n}N^{-\beta}_{n-1}S_{n-1}
\leq\mathrm{C}_1\gamma N^{-1}_{n}.
\end{align*}
Therefore we can conclude that for all $n\in\mathbb{N}^+$,
\begin{align}\label{new3}
\textstyle\mathrm{meas}(\mathrm{R}_{n,1})\leq\mathrm{C}_1\gamma\sum_{N_{n}< j\leq N_{n+1}}\frac{1}{j^{\tau}}+\mathrm{C}_1\gamma N^{-1}_{n}.
\end{align}

Another step is to estimate the measure of $\mathrm{R}_{n,2}$. It is obvious that
\begin{align*}
 C_1 j<\omega l< C_2 j,\quad\forall l\in\mathbb{N}^{+},\forall\omega \in\Omega^{n,2}_{j,l}.
\end{align*}
Applying the similar  technique as above yields that $\mathrm{meas}(\Omega^{n,2}_{j,l})<\frac{C\tilde{\omega}_2\gamma}{j^{\tau+1}}$, $\forall l\in\mathcal{I}(j)$. This leads to
\begin{align}\label{E0.11}
\textstyle\mathrm{meas}\left(\bigcup_{N_n< j\leq N_{n+1},l\geq1}\Omega^{n,2}_{j,l}\cap(\tilde{\omega}_1,\tilde{\omega}_2)\right){\leq}\mathrm{C}_1\gamma\sum_{N_n<j\leq N_{n+1}}\frac{1}{j^{\tau}},\quad \forall n\in\mathbb{N}.
\end{align}
As a result, by virtue of \eqref{zero}--\eqref{E0.11} and the fact $\tau>1$, we get
\begin{align*}
\textstyle\mathrm{meas}\left(\bigcup_{n\in\mathbb{N}} \mathrm{R}_{n}\cap (\tilde{\omega}_1,\tilde{\omega}_2)\right)\leq \mathrm{C}\gamma\left(\sum_{n\in \mathbb{N}^{+}}N^{-1}_{n}+\sum_{j\geq1}\frac{1}{j^{\tau}}\right)=O(\gamma).
\end{align*}
Notice that the second term in \eqref{E0.10} follows from
\begin{align*}
\textstyle\mathrm{meas(B_\gamma\cap A_{\gamma})}=\int^{\delta \gamma^2}_{0}\mathrm{meas}(A_\gamma(\epsilon)\cap(\tilde{\omega}_1,\tilde{\omega}_2))\mathrm{d}\epsilon.
\end{align*}
Hence the proof of this lemma is completed.
\end{proof}

Theorem \ref{Th1} follows from Lemma \ref{lemma3.1}, Lemma \ref{lemma4.1} and  Lemma \ref{lemma4}. We are now turning to  the proof of Theorem \ref{Th1}.
\begin{proof}[\underline{Proof of Theorem \ref{Th1}}]
It is clear that $A_{\gamma}\subset (0,\delta\gamma^2)\times(\frac12,\frac23)$. Lemma \ref{lemma4} shows that $A_\gamma$ is a Cantor set. For all $(\epsilon,\omega)\in A_{\gamma}$, it follows from Lemma \ref{lemma4.1} that $w(\epsilon,\omega)\in C^1(A_\gamma;W\cap H^s)$ solves  the range equation \eqref{E:range}.  Moreover, in view of Lemma \ref{lemma4.1}, we can derive
\begin{align*}
\textstyle\|w\|_{s}\leq\sum_{k=0}^{+\infty}\|h_{k}\|_{s}
\stackrel{\eqref{E4.11}}{\leq}\sum_{k=1}^{+\infty}{K_2\epsilon \gamma^{-1}N^{-\sigma-1}_{k}}+{ K_1\epsilon\gamma^{-1} N^{\tau-1}_0}< r.
\end{align*}
Then it can be seen from Lemma \ref{lemma3.1} that $v(\epsilon,\omega,w)$ solves the bifurcation
equation \eqref{E:bifurcation}. As a result,
\begin{align*}
{u}(\epsilon,\omega)=v(\epsilon,\omega,{w}(\epsilon,\omega))+{w}(\epsilon,\omega)\in {H}^1(\mathbb{T})\oplus(W\cap H^s),
\end{align*}
with $\int_{\mathbb{T}^2}u(\epsilon,\omega)(t, x)\mathrm{d}t\mathrm{d}x=0$,
is a solution of equation \eqref{E1.2}.

In addition, since $u$ solves
\begin{align*}
\omega^2u_{tt}(\cdot,x)=\epsilon f(x,u(\cdot,x))+\epsilon g(\cdot,x)+a(\cdot)u_{xx}(\cdot,x),
\end{align*}
we have
\begin{align*}
u_{tt}(\cdot,x)\in H^{1}(\mathbb{T}),\quad\forall x\in\mathbb{T}.
\end{align*}
Hence we have $u(\cdot,x)\in H^{3}{(\mathbb{T})}\subset C^2(\mathbb{T})$ for all $x\in\mathbb{T}$.

Consequently, we obtain  the conclusion of Theorem \ref{Th1}.
\end{proof}

%

\section{Appendix}
In the Appendix, we will supplement some results used in the  proof of Theorem \ref{Th1} for completeness. First, we need to give the proof of Remark \ref{remark1}.

\begin{proof}[\underline{Proof of Remark \ref{remark1}}]
{\rm{(i)}} For all $u,v\in H^s$, we decompose $uv=\sum_{j}(\sum_{k}u_{j-k}v_{k})e^{\mathrm{i}jx}$.
Then using the Cauchy inequality yields that
\begin{align*}
\|uv\|^2_{s}=&\textstyle\sum_{j}(1+j^{2s})\|\sum_{k}u_{j-k}v_{k}\|^2_{H^1}
\leq\sum_{j}(\sum_{k}(1+j^{2s})^{\frac{1}{2}}c_{jk}\|u_{j-k}v_{k}\|_{H^1}\frac{1}{c_{jk}})^2\\
\leq&\textstyle\sum_{j}(\sum_{k}\frac{1}{c^2_{jk}})(
\sum_{k}\|u_{j-k}\|^2_{H^1}(1+(j-k)^{2s})\|v_{k}\|^2_{H^1}(1+k^{2s})),
\end{align*}
where $c^2_{jk}=\frac{(1+k^{2s})(1+(j-k)^{2s})}{1+j^{2s}}$. Moreover, it is easy to obtain that
\begin{align*}
1+j^{2s}&\leq1+(k+j-k)^{2s}\leq1+2^{2s-1}(k^{2s}+(j-k)^{2s})\\
&\leq2^{2s-1}(1+k^{2s})+2^{2s-1}(1+(j-k)^{2s}).
\end{align*}
This leads to
\begin{align*}
\textstyle\sum_{k}\frac{1}{c^2_{jk}}\leq2^{2s-1}(\sum_{k}\frac{1}{1+k^{2s}}+\sum_{k}\frac{1}{1+(j-k)^{2s}})
=2^{2s}\sum_{k}\frac{1}{1+k^{2s}}\stackrel{s>1/2}{=:}(C(s))^2.
\end{align*}
Hence we give rise to the first conclusion in Remark \ref{remark1}.

{\rm{(ii)}} From the  Cauchy inequality, we get
\begin{align*}
\textstyle \max_{x\in \mathbb{T}}\|u(\cdot,x)\|_{{H}^1(\mathbb{T})}\leq C\sum_{j}\|u_j\|_{H^1}\leq C (\sum_{j}\|u_j\|^2_{H^1}(1+j^{2s}))^{\frac{1}{2}}(\sum_{j}\frac{1}{1+j^{2s}})^{\frac{1}{2}}\leq C(s)\|u\|_{s}.
\end{align*}
Thus we arrive at the other one in Remark \ref{remark1}.
\end{proof}

The next thing is to recall Lemmata \ref{lemma2.1}--\ref{lemma2.5} introduced in \cite[cf. Lemmata 2.1--2.3]{berti2008cantor}.
\begin{lemm}[Moser--Nirenberg]\label{lemma2.1}
Let $s'\geq0$ and $s>1/2$. For all $u_1,u_2\in H^{s'}\cap H^s$, one has
\begin{align}
\|u_1u_2\|_{s'}
&\leq\textstyle  C(s')(\|u_1\|_{L^{\infty}(\mathbb{T};H^1(\mathbb{T}))}\|u_2\|_{{s'}}+\|u_1\|_{{s'}}\|u_2\|_{L^{\infty}(\mathbb{T};H^1(\mathbb{T}))})\label{E2.10}\\
&\leq  C(s')\left(\|u_1\|_{s}\|u_2\|_{{s'}}+\|u_1\|_{{s'}}\|u_2\|_{s}\right).\label{E2.1}
\end{align}
\end{lemm}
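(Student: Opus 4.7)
The second bound \eqref{E2.1} is immediate from \eqref{E2.10} once we know the embedding $\|u_i\|_{L^\infty(\mathbb{T};H^1(\mathbb{T}))}\leq C(s)\|u_i\|_s$ for $s>1/2$, which is Remark \ref{remark1}(ii). So the whole content lies in proving \eqref{E2.10}.

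The plan is to identify $\mathcal{H}^{s'}$ with the vector-valued Sobolev space $H^{s'}(\mathbb{T}_x;H^1(\mathbb{T}_t))$ (the two norms are proportional by Plancherel in $x$, since $H^1(\mathbb{T})$ is a Hilbert space), and then to establish \eqref{E2.10} as a Moser--Nirenberg estimate for this space. The key structural input is that $H^1(\mathbb{T})$ is a Banach algebra in one space dimension, so
\[
\|u_1(\cdot,x)\,u_2(\cdot,x)\|_{H^1(\mathbb{T}_t)}\leq C\,\|u_1(\cdot,x)\|_{H^1(\mathbb{T}_t)}\,\|u_2(\cdot,x)\|_{H^1(\mathbb{T}_t)} \qquad \text{for a.e. }x\in\mathbb{T}.
\]
For $s'=0$ this already yields \eqref{E2.10} in one line: by Plancherel in $x$,
\[
\|u_1u_2\|_0^{\,2}\;\propto\;\int_\mathbb{T}\|u_1(\cdot,x)\,u_2(\cdot,x)\|_{H^1(\mathbb{T}_t)}^{\,2}\,dx\;\leq\;C\|u_1\|_{L^\infty(\mathbb{T};H^1(\mathbb{T}))}^{\,2}\,\|u_2\|_0^{\,2}.
\]

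For $s'>0$ I will expand $(u_1u_2)_j=\sum_k (u_1)_{j-k}(u_2)_k$, apply the $H^1$ algebra inequality to each term, and then invoke Peetre's inequality
\[
(1+j^{2s'})^{1/2}\leq C(s')\bigl((1+(j-k)^{2s'})^{1/2}+(1+k^{2s'})^{1/2}\bigr)
\]
to split the weighted $\ell^2_j$-norm of the resulting convolution into two pieces that are symmetric in $u_1$ and $u_2$. Each piece must then be estimated so that one factor supplies its $\mathcal{H}^{s'}$-norm while the other contributes only its $L^\infty_x H^1_t$-norm.

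The subtle point---and the only real obstacle---is obtaining an $L^\infty$ bound (not an $\ell^1$ bound) on the ``low regularity'' factor in each convolution. A naive Young's inequality $\|A\ast b\|_{\ell^2}\leq \|A\|_{\ell^2}\|b\|_{\ell^1}$ would place $\sum_k \|(u_i)_k\|_{H^1(\mathbb{T}_t)}$ on the right, a quantity that in general is strictly larger than $\|u_i\|_{L^\infty(\mathbb{T};H^1(\mathbb{T}))}$. The fix is either (a) a Littlewood--Paley/paraproduct decomposition with a high--low, low--high, high--high split of $u_1u_2$, or equivalently (b) returning to the physical side in $x$ via Plancherel, as in the $s'=0$ case, so that the offending $\ell^1_k$-sum is replaced by an $L^\infty_x$-norm of an auxiliary function. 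Either route delivers \eqref{E2.10} with a constant depending only on $s'$; the detailed bookkeeping is carried out in \cite{berti2008cantor}.
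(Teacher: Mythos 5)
The paper does not give its own proof of Lemma \ref{lemma2.1}: the Appendix lists it alongside Lemmata \ref{lemma2.2}--\ref{lemma2.5} and simply refers to \cite{berti2008cantor}, so your proposal has to stand on its own. Most of it does: the reduction of \eqref{E2.1} to \eqref{E2.10} via Remark \ref{remark1}(ii), the identification of $\mathcal{H}^{s'}$ with the $H^1(\mathbb{T}_t)$-valued Sobolev space $H^{s'}(\mathbb{T}_x;H^1(\mathbb{T}_t))$, the one-line $s'=0$ estimate via Plancherel in $x$ and the $H^1(\mathbb{T}_t)$ algebra property, and the observation that Peetre plus Young would only yield the $\ell^1_k$-sum $\sum_k\|(u_i)_k\|_{H^1}$ (strictly stronger than $\|u_i\|_{L^\infty_xH^1_t}$) are all correct, and that last point is exactly what distinguishes the tame estimate \eqref{E2.10} from the algebra estimate of Remark \ref{remark1}(i).

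There is, however, one genuine gap in the $s'>0$ sketch, and it is an order-of-operations problem. You first apply the $H^1(\mathbb{T}_t)$-algebra inequality termwise in the convolution $(u_1u_2)_j=\sum_k(u_1)_{j-k}(u_2)_k$, reducing to the scalar sequences $a_k:=\|(u_1)_k\|_{H^1}$, $b_k:=\|(u_2)_k\|_{H^1}$, and only \emph{then} propose a Littlewood--Paley/paraproduct argument to replace the $\ell^1_k$ bound by an $L^\infty_x$ bound. That cannot work: once reduced to scalars, the relevant $L^\infty$-quantity is $\|\sum_k b_k e^{\mathrm{i}kx}\|_{L^\infty(\mathbb{T}_x)}$, which is \emph{not} controlled by $\|u_2\|_{L^\infty(\mathbb{T};H^1(\mathbb{T}))}$. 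For instance, take $u_2(t,x)=\phi(t)\sum_{|k|\le N}\varepsilon_k e^{\mathrm{i}kx}$ with independent random signs $\varepsilon_k=\pm1$: every $b_k$ equals $\|\phi\|_{H^1}$, so the first quantity is $(2N+1)\|\phi\|_{H^1}$, while by Salem--Zygmund $\|u_2\|_{L^\infty_xH^1_t}$ is typically of order $\sqrt{N\log N}\,\|\phi\|_{H^1}$, and the ratio diverges. The termwise algebra step destroys the cancellation that the $L^\infty$-norm sees, and no paraproduct bookkeeping on the scalar level can recover it. The correct order --- which is what the phrase ``paraproduct decomposition of $u_1u_2$'' in your option (a) should really mean --- is to run the Bony decomposition on the $H^1(\mathbb{T}_t)$-valued functions of $x$ \emph{first} and apply the algebra inequality only inside each frequency-localized block: with $\Delta_n$ the $n$-th Littlewood--Paley projector in $x$ and $S_{n-1}=\sum_{m<n}\Delta_m$, one has $\|S_{n-1}u_1\cdot\Delta_n u_2\|_{L^2_xH^1_t}\le C\|S_{n-1}u_1\|_{L^\infty_xH^1_t}\|\Delta_n u_2\|_{L^2_xH^1_t}\le C\|u_1\|_{L^\infty_xH^1_t}\|\Delta_n u_2\|_{L^2_xH^1_t}$, and summing with weights $2^{2ns'}$ gives the paraproduct bound. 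With this ordering your option (a) is a sound plan for \eqref{E2.10}; as written, the premature reduction to scalar sequences is fatal.
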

\begin{lemm}[Logarithmic convexity]\label{lemma2.2}
Let $0\leq\mathrm{a}\leq \mathrm{a}'\leq \mathrm{b}'\leq\mathrm{b}$ with $\mathrm{a}+\mathrm{b}=\mathrm{a}'+\mathrm{b}'$, and $\mathfrak{v}:=\frac{\mathrm{b}-\mathrm{a}'}{\mathrm{b}-\mathrm{a}}$. Then
\begin{align*}
\textstyle \|u_1\|_{\mathrm{a}'}\|u_2\|_{\mathrm{b}'}\leq\mathfrak{v}\|u_1\|_{\mathrm{a}}\|u_2\|_{\mathrm{b}}
+(1-\mathfrak{v})\|u_2\|_{\mathrm{a}}\|u_1\|_{\mathrm{b}},\quad u_1,u_2\in H^{\mathrm{b}}.
\end{align*}
In particular, one has
\begin{align}\label{E2.3}
\|u\|_{\mathrm{a}'}\|u\|_{\mathrm{b}'}\leq\|u\|_{\mathrm{a}}\|u\|_{\mathrm{b}},\quad u\in H^{\mathrm{b}}.
\end{align}
\end{lemm}
\begin{lemm}\label{lemma2.5}
Let $y\longmapsto f(\cdot,y)$ be in $C^1(\mathbb{R};H^{1}(\mathbb{T}))$ and $m:=\|y\|_{L^{\infty}(\mathbb{T})}$. Then the composition operator $y(t)\longmapsto f(t,y(t))$ belongs to $C(H^1(\mathbb{T});H^1(\mathbb{T}))$ satisfying
\begin{align*}
\textstyle\|f(\cdot,y)\|_{H^1}\leq C(\max_{y\in[-m,m]}\|f(\cdot,y)\|_{H^1}+\max_{y\in[-m,m]}\|\partial_yf(\cdot,y)\|_{H^1}\|y\|_{H^1}).
\end{align*}
\end{lemm}

Based on Lemmata \ref{lemma2.1}--\ref{lemma2.5}, we have the following lemma.

\begin{lemm}\label{lemma2.3}
Let $(x,u)\longmapsto f(\cdot,x,u)$ belong to $C^\ell(\mathbb{T}\times\mathbb{R};H^1(\mathbb{T}))$ with $\ell\geq1$. For all $s>{1}/{2}$ and $0\leq s'\leq \ell-1$, the composition operator
$u(t,x)\longmapsto f(t,x,u(t,x))$ is in $C(H^{s}\cap H^{s'};H^{s'})$ with
\begin{align*}
\|f(t,x,u)\|_{{s'}}\leq C(s',\|u\|_{s})(1+\|u\|_{{s'}}).
\end{align*}
\end{lemm}
\begin{proof}
If $s'=\mathrm{p}\in\mathbb{N}$ with $\mathrm{p}\leq \ell-1$, we need to prove that for all $u\in H^{s}\cap H^{\mathrm{p}}$,
\begin{align}\label{E3.0}
\|f(t,x,u)\|_{\mathrm{p}}\leq C(\mathrm{p},\|u\|_{s})(1+\|u\|_{\mathrm{p}}),
\end{align}
 and that
 \begin{align}\label{E3.1}
 f(t,x,u_n)\rightarrow f(t,x,u)\quad \text{as } u_n\rightarrow u ~\text{in }  H^{s}\cap H^{\mathrm{p}}.
 \end{align}
 Let us verify \eqref{E3.0}--\eqref{E3.1} by a recursive argument.

For $\mathrm{p}=0$, it follows from Lemma \ref{lemma2.5} and Remark \ref{remark1} that
\begin{align}
\textstyle\|f(t,x,u)\|_{0}\leq& \textstyle C\max_{x\in\mathbb{T}}\|f(\cdot,x,u(\cdot,x))\|_{H^1(\mathbb{T})}\nonumber\\
{\leq}&\textstyle C'(1+\max_{x\in\mathbb{T}}\|u(\cdot ,x)\|_{H^1(\mathbb{T})})\nonumber\\
{\leq}&\textstyle C''(1+\|u\|_{s})=:C(\|u\|_{s}).\label{E2.00}
\end{align}
If $\ell\geq2$, then a similar argument as above can yield that
\begin{align}\label{E2.7}
\textstyle \|\partial_{x}f(t,x,u)\|_{0}\leq C(\|u\|_{s}),\quad\max_{x\in\mathbb{T}}\|\partial_{u}f(\cdot,x,u(\cdot,x))\|_{H^1(\mathbb{T})}\leq C(\|u\|_{s}).
\end{align}
Moreover, it can be shown from Remark \ref{remark1} that
\begin{align*}
\textstyle\max_{x\in\mathbb{T}}\|u_{n}(\cdot,x)-u(\cdot,x)\|_{H^1(\mathbb{T})}\rightarrow0 \quad \text { as }u_n\rightarrow u \text { in }  H^{s}\cap H^{0}.
\end{align*}
Then using  the continuity property in Lemma \ref{lemma2.5} and the compactness of $\mathbb{T}$ yields that
\begin{align*}
\textstyle\|f(t,x,u_{n})-f(t,x,u)\|_{0}\leq C\max_{x\in\mathbb{T}}\|f(\cdot,x,u_{n}(\cdot,x))-f(\cdot,x,u(\cdot,x))\|_{H^1(\mathbb{T})}\rightarrow0
\end{align*}
if $ u_n$ tends to  $u$ in $ H^{s}\cap H^{0}$.

Suppose that \eqref{E3.0} could hold for $\mathrm{p}=k$, with $k\in \mathbb N^+$.  We will show that  it follows for $\mathrm{p}=k+1$, where $k+1 \leq \ell-1$. Then the assumption for $\mathrm{p}=k$ implies that for all $u\in H^s\cap H^{k+1}$,
\begin{align}\label{E2.0}
\|\partial_{x}f(t,x,u)\|_{k}\leq C(k,\|u\|_{s})(1+\|u\|_{k}),\quad\|\partial_{u}f(t,x,u)\|_{k}\leq C(k,\|u\|_{s})(1+\|u\|_{k}).
\end{align}
If we set $\rho(t,x):=f(t,x,u(t,x))$, then $\rho(t,x)=\sum_{j}\rho_{j}(t)e^{{\rm i}j x}$. Observe that  $\partial_{x}\rho(t,x)=\sum_{j}\mathrm{i}j\rho_{j}(t)e^{{\rm i}j x}$. Then the definition of $s$--norm (recall \eqref{E2.2}) yields that
\begin{align*}
\textstyle\|\rho\|^2_{k+1}=&\textstyle\sum_{j}(1+j^{2(k+1)})\|\rho_{j}\|^2_{H^1}=
\sum_{j}\|\rho_{j}\|^2_{H^1}+\sum_{j}j^{2k}j^2\|\rho_j\|^2_{H^1}\\
=&\textstyle\sum_{j}\|\rho_{j}\|^2_{H^1}+\sum_{j}j^{2k}\|(\mathrm{i}j)\rho_j\|^2_{H^1}
\leq(\|\rho\|_{0}+\|\partial_{x}\rho\|_{k})^2.
\end{align*}
This leads to
\begin{align}\label{E2.8}
\|f(t,x,u)\|_{k+1}\leq \|f(t,x,u)\|_{0}+\|\partial_{x}f(t,x,u)\|_{k}+\|\partial_{u}f(t,x,u)\partial_{x}u\|_{k}.
\end{align}

If $\mathrm{p}=1$ ($k=0$), then it follows from \eqref{E2.00}--\eqref{E2.7} and \eqref{E2.8} that
\begin{align*}
\|f(t,x,u)\|_{1}\leq&\textstyle \|f(t,x,u)\|_{0}+\|\partial_{x}f(t,x,u)\|_{0}+
C\max_{x\in\mathbb{T}}\|\partial_{u}f(\cdot,x,u(\cdot,x))\|_{H^1(\mathbb{T})}\|\partial_{x}u\|_{0}\\
\leq&\textstyle 2C(\|u\|_{s})+C'(\|u\|_{s})\|u\|_{1}\leq C(1,\|u\|_{s})(1+\|u\|_{1}).
\end{align*}
In addition, denote $\mathfrak{s}\in(\frac12,\min(1,s))$. It is easy to see that
\begin{align*}
\begin{cases}
\mathfrak{{s}}<k<\mathfrak{s}+1<k+1,\quad k=1,\\
\mathfrak{{s}}<\mathfrak{s}+1<k<k+1,\quad  k\geq2.
\end{cases}
\end{align*}
Combining this with \eqref{E2.3} shows that
\begin{align}\label{E2.9}
\|u\|_{k}\|u\|_{{\mathfrak{s}+1}}\leq\|u\|_{k+1}\|u\|_{\mathfrak{s}}\leq\|u\|_{k+1}\|u\|_{s}.
\end{align}
As a consequence, by \eqref{E2.10}, \eqref{E2.00}--\eqref{E2.9}, we deduce
\begin{align*}
\|f(t,x,u)\|_{k+1}\leq
 & C(\|u\|_{s})+C(k,\|u\|_{s})(1+\|u\|_{k})+C(k)\|\partial_{u}f(t,x,u)\|_{k}
\|u_{x}\|_{L^{\infty}(\mathbb{T};H^1(\mathbb{T}))}\\
&+C(k)
\|\partial_{u}f(t,x,u)\|_{L^{\infty}(\mathbb{T};H^1(\mathbb{T}))}\|u\|_{k+1}\\
\leq &C(\|u\|_{s})+C(k,\|u\|_{s})(1+\|u\|_{k})+C(k)C(k,\|u\|_{s})(1+\|u\|_{k})\|u\|_{{\mathfrak{s}+1}}\\
&+C(k)C(\|u\|_{s})\|u\|_{k+1}\\
\leq&C(k+1,\|u\|_{s})(1+\|u\|_{k+1}).
\end{align*}
Hence \eqref{E3.0} follows for $\mathrm{p}=k+1$.

On the other hand,  suppose that \eqref{E3.1} could hold for $\mathrm{p}=k$. From formula \eqref{E2.8}, we get the continuity property of $f$ with respect to $u$  for  $\mathrm{p}=k+1$ with $k+1\leq \ell-1$.

If $s'$ is not an integer, the conclusion follows from the  Fourier dyadic decomposition. The argument is similar to the proof of Lemma A.1 in \cite{Delort2011}.
\end{proof}
\begin{lemm}\label{lemma2.4}
Let $(x,u)\longmapsto f(\cdot,x,u)$ be in $C^\ell(\mathbb{T}\times\mathbb{R};H^1(\mathbb{T}))$ with $\ell\geq3$. For all $0\leq s'\leq \ell-3$, define the mapping
\begin{align*}
F:H^{s}\cap H^{s'}&\longrightarrow {H}^{s'},\quad u\longmapsto f(t,x,u).
\end{align*}
Then $F$ is a  $C^2$ mapping with respect to $u$ and satisfies that for all $h\in H^s\cap H^{s'}$,
\begin{align*}
{\rm D}F(u)[h]=\partial_{u}f(t,x,u)h, \quad {\rm D}^2F(u)[h,h]=\partial^2_{u}f(t,x,u)h^2, 
\end{align*}
where
\begin{align}\label{E2.5}
\|\partial_{u}f(t,x,u)\|_{{s'}}\leq C(s',\|u\|_{s})(1+\|u\|_{{s'}}),\quad\|\partial^2_{u}f(t,x,u)\|_{{s'}}\leq C(s',\|u\|_{s})(1+\|u\|_{{s'}}).
\end{align}
\end{lemm}
\begin{proof}
For $\ell\geq3$, observe that
\begin{align*}
(x,u)\longmapsto \partial_{u}f(\cdot,x,u)\in C^{\ell-1}(\mathbb{T}\times\mathbb{R};H^1(\mathbb{T})),\quad (x,u)\longmapsto \partial^2_{u}f(\cdot,x,u)\in C^{\ell-2}(\mathbb{T}\times\mathbb{R};H^1(\mathbb{T})).
\end{align*}
In view of Lemma \ref{lemma2.3}, the mappings $u\longmapsto\partial_{u}f(t,x,u)$, $u\longmapsto\partial^2_{u}f(t,x,u)$ are continuous and satisfy two estimates in \eqref{E2.5}.

It remains to verify that $F$  is $C^2$ with respect to $u$. Using the continuity property of $u\longmapsto\partial_{u}f(t,x,u)$ yields that
\begin{align*}
&\|f(t,x,u+h)-f(t,x,u)-\partial_{u}f(t,x,u)h\|_{{s'}}=\textstyle\|h\int_0^1(\partial_{u}f(t,x,u+\mathfrak{v} h)-\partial_{u}f(t,x,u))~\mathrm{d}\mathfrak{v}\|_{{s'}}\\
&\leq \textstyle C(s')\|h\|_{{\max{\{s,s'\}}}}\max_{\mathfrak{v}\in[0,1]}\|\partial_{u}f(t,x,u+\mathfrak{v} h)-\partial_{u}f(t,x,u)\|_{{\max{\{s,s'\}}}}\\
&=\textstyle o(\|h\|_{{\max{\{s,s'\}}}}).
\end{align*}
This implies that ${\rm D}F(u)[h]=\partial_{u}f(t,x,u)h$ and that $u\longmapsto {\rm D}F(u)$ is continuous. In addition,
\begin{align*}
&\textstyle\partial_{u}f(t,x,u+h)h-\partial_{u}f(t,x,u)h-\partial^2_{u}f(t,x,u)h^2=h^2\int_0^1(\partial^2_{u}f(t,x,u+\mathfrak{v} h)-\partial^2_{u}f(t,x,u))~\mathrm{d}\mathfrak{v}.
\end{align*}
By the similar argument as above, $F$ is twice differentiable with respect to $u$ and that $u\longmapsto {\rm D}^2_uF(u)$ is continuous.

Thus this completes the proof.
\end{proof}



\end{document}